\newlength{\figurewidth}
\newlength{\figureheight}
\newtheorem{theorem}{Theorem}[section]
\newtheorem{proposition}[theorem]{Proposition}
\newtheorem{lemma}[theorem]{Lemma}
\newtheorem{remark}[theorem]{Remark}
\numberwithin{equation}{section}
    \newcommand{\Ai}{\mathrm{Ai}}
\author[1]{Daan Huybrechs}
\author[2]{Arno B.J. Kuijlaars}
\author[1]{Nele Lejon} 
\affil[1]{KU Leuven, Department of Computer Science,  Celestijnenlaan 200A, 3001 Leuven, Belgium, daan.huybrechs@cs.kuleuven.be, nele.lejon@gmail.com}
\affil[2]{KU Leuven, Department of Mathematics,  Celestijnenlaan 200B, 3001 Leuven, Belgium, arno.kuijlaars@kuleuven.be}
\date{}
\title{A numerical method for oscillatory integrals with \\coalescing saddle points}
\begin{document}
\maketitle
\begin{abstract}
The value of a highly oscillatory integral is typically determined asymptotically by the behaviour of the integrand near a small number of critical points. These include the endpoints of the integration domain and the so-called stationary points or saddle points -- roots of the derivative of the phase of the integrand -- where the integrand is locally non-oscillatory. Modern methods for highly oscillatory quadrature exhibit numerical issues when two such saddle points coalesce. On the other hand, integrals with coalescing saddle points are a classical topic in asymptotic analysis, where they give rise to uniform asymptotic expansions in terms of the Airy function. In this paper we construct Gaussian quadrature rules that remain uniformly accurate when two saddle points coalesce. These rules are based on orthogonal polynomials in the complex 
plane. We analyze these polynomials, prove their 
existence for even degrees, and describe an accurate and efficient numerical scheme for the evaluation of oscillatory integrals with coalescing saddle points.
\end{abstract}

\section{Introduction and statement of results}
\label{sect_intro}

\subsection{Introduction}

Highly oscillatory integrals are a challenge for numerical integration methods, as the oscillatory nature of an integrand typically necessitates a large number of quadrature points. However, efficient numerical methods have been described for oscillatory integrals of the form
\begin{equation}\label{eq:I}
 I[f]=\int_a^b f(x)e^{i\omega g(x)} {\rm d}x,
\end{equation}
where both $f$ and $g$ are smooth functions and the oscillations can be attributed to a large value of the frequency parameter $\omega$. Examples include Filon-type quadrature, Levin quadrature, the numerical method of steepest descent and others \cite{iserles2005quad,olver2006levin,huybrechs2006osc1,huybrechs2009hoq,deano2018book}. Integrals of this form, and variations thereof, arise in a variety of applications typically involving wave phenomena.

An advantageous property of integral \eqref{eq:I} is that it is amenable to asymptotic analysis, and its value can be approximated using the method of stationary phase or the method of steepest descent \cite{bleistein1975asymptotic,wong2001asymptotic}. This approximation, being asymptotic, improves with increasing $\omega$. This is unlike classical quadrature schemes, which deteriorate with increasing $\omega$. The main goal of highly oscillatory quadrature methods, such as the methods mentioned above, is to combine improved accuracy for increasing $\omega$ with numerical convergence for any value of $\omega$, at a computational cost that is independent of $\omega$.

Asymptotic analysis of oscillatory integrals becomes more involved in the presence of \emph{stationary points} or \emph{saddle points}. These are roots of the derivative of the phase function, $g'(\xi)=0$, around which the integrand is locally non-oscillatory. Highly oscillatory quadrature techniques have to explicitly take such stationary points into account. The situation worsens when two stationary points are close to each other or when two stationary points coalesce for a particular value of a parameter. A canonical example is given by the cubic oscillator with a linear perturbation,
\begin{equation}\label{eq:osc}
 g(x,c)=\frac{x^3}{3}-c x.
\end{equation}
This oscillator features stationary points at $\pm \sqrt{c}$, coalescing at $c=0$. These stationary points are on the real line when $c>0$, and on the imaginary axis if $c<0$.

Classical Poincar{\'e}-type asymptotic expansions (i.e., using just integer powers of $\omega^{-1})$ break down in the presence of two coalescing saddle points. This problem is well-known in asymptotic analysis and the solution is to consider uniform asymptotic expansions, see for example \cite{olver1974specialfunctions,bleistein1975asymptotic,wong2001asymptotic}. They are uniform in the sense that they are valid for a range of the parameter $c$, including its critical value $c=0$. Uniform asymptotic expansions typically involve a special function that captures the special transitional behaviour around a critical value of a parameter. In the case of two coalescing saddle points, the special function is the classical Airy function, which itself has an integral representation that involves a cubic oscillator. An example of such an expansion in this paper is \eqref{eq:airy_expansion} further on.

Numerical methods based on the existence of asymptotic expansions do not necessarily break down completely, but they certainly deteriorate in the presence of coalescing saddle points. In this paper we explore the analogue of uniform asymptotic expansions for one scheme, the numerical method of steepest descent. Though a great variety of uniform asymptotic expansions have been described in the literature for variations of integral \eqref{eq:I}, the Airy case is a canonical example, which has received the most study. For that reason, we pursue this case in detail in this paper.

The goal of this paper is the construction and analysis of a uniformly applicable quadrature rule, uniform in the parameter $c$ near $c=0$, for the canonical integral
\begin{equation}\label{eq:quadrature_rule}
 \int_{-1}^1 f(x) e^{i \omega \left( \frac{x^3}{3}-cx \right)} {\rm d}x \approx \sum_{k=1}^n w_k f(x_k),
\end{equation}
where $n$ is small and independent of $\omega$. We focus on a quadrature rule with optimal asymptotic order, in the sense that the error decays at the fastest algebraic rate in $\omega^{-1}$ among all quadrature rules with $n$ points. A consequence is that the points and weights depend on $\omega$ and $c$ and, furthermore, that the points $x_k$ typically lie in the complex plane. For that reason, we assume that $f$ is an analytic function at least in an open neighbourhood of $[-1,1]$ in the complex plane. Optimal quadrature rules involving real quadrature points in the interval $[-1,1]$ only are the subject of ongoing research, but in any case their convergence for large $\omega$ is slower than that of rules with complex points.

Integral \eqref{eq:quadrature_rule} is just one specific example of an oscillatory integral with coalescing saddle points. The quadrature rule can also be applied to integrals with more general oscillators of the form $e^{i \omega g(x)}$ where $g'$ has two nearby roots, on the real axis or elsewhere in the complex plane. Much like the derivation of uniform asymptotic expansions for such integrals, this requires a smooth change of variables to the canonical case. This change of variables is standard and is recalled in \S\ref{s:genericphase}. An example of the approach is included in \S\ref{sect_application}. Phase functions with a larger number of stationary points would require separate (and more involved) treatment. Other cases that require uniform asymptotics might be amenable to similar numerical techniques, but are not explored in this paper. Each case, such as the coalescence of a stationary point with an endpoint or of a stationary point with a pole, would require its own separate analysis. Uniform asymptotic expansions are known for a sizable number of different cases. An overview, along with the relevant special function in each case, is given in \cite[Chapter 20]{temme2014}.

\subsection{Main results and outline of the paper}

The analysis in this paper centers around a family of orthogonal polynomials $p_{n,\delta}(z)$. They are monic and orthogonal with respect to a complex-valued oscillatory weight function,
\begin{equation}\label{eq:orthogonality}
 \int_\Gamma p_{n,\delta}(z) z^k e^{i \left( \frac{z^3}{3} - \delta z \right)} {\rm d}z=0,\qquad k=0,\ldots, n-1.
\end{equation}
Here, $\Gamma$ is any contour in the complex plane that connects the points $\infty \times e^{\frac{5i\pi}{6}}$ and $\infty \times e^{\frac{i\pi}{6}}$ at infinity. After a suitable rescaling, that will be detailed further on, the roots of these polynomials in combination with the roots of some other (known) polynomials give rise to the sought Gaussian quadrature rule of the form \eqref{eq:quadrature_rule}. The polynomials and their roots are independent of the precise choice of $\Gamma$ in \eqref{eq:orthogonality}, as long as $\Gamma$ connects the two given points at infinity, since the path of integration can be analytically deformed without changing the value of the integral.

The orthogonality conditions \eqref{eq:orthogonality} represent a non-classical setting of orthogonal polynomials, because the weight function is oscillatory. Hence, unique existence of the polynomials is not guaranteed for each value of $\delta$. We proceed by analyzing the corresponding Hankel determinants
\[
h_n = \det H_n,
\]
where $H_n$ is the Hankel matrix given by
\begin{equation}\label{eq:hankel}
  H_n=\left[
  \begin{array}{cccc}
         \mu_0 & \mu_1 & \cdots & \mu_{n-1}\\
         \mu_1 & \mu_2 & \cdots & \mu_{n}\\
         \vdots & \vdots & & \vdots\\
         \mu_{n-1} & \mu_{n} & \cdots & \mu_{2n-2}
  \end{array}
  \right]
\end{equation}
in terms of the \emph{moments} $\mu_k$ of the weight function,
\begin{equation} \label{eq:moments}
 \mu_k = \int_\Gamma z^k e^{i \left( \frac{z^3}{3} - \delta z \right)} {\rm d}z.
\end{equation}
These quantities are all functions of $\delta$, which we omit in our notation. Existence of the polynomial $p_{n,\delta}(z)$ for a particular value of $\delta$ is equivalent to the non-vanishing of $h_n$, since the latter appears in the denominator of the well-known determinant formula for orthogonal polynomials that remains valid in our setting,
\begin{equation}\label{eq:pn_determinant}
  p_{n,\delta}(x)=\frac{1}{h_n}\det\!
  \left[
  \begin{array}{ccccc}
         \mu_0 & \mu_1 & \cdots & \mu_{n-1} & 1\\
         \mu_1 & \mu_2 & \cdots & \mu_n & x\\
         \vdots & \vdots & & \vdots & \vdots\\
         \mu_n & \mu_{n+1} & \cdots & \mu_{2n-1} & x^n
  \end{array}
  \right].
\end{equation}

In the theoretical part of this paper, we show the following results.

\begin{theorem}\label{thm:existence_even}
 For any $n \in \mathbb{N}$ and $\delta \in \mathbb{R}$, $h_{2n} \neq 0$.
\end{theorem}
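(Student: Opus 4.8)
The plan is to turn the complex, oscillatory determinant $h_{2n}$ into a real quantity by exploiting a reflection symmetry of the weight, then to recognize the moment sequence as the successive $\delta$-derivatives of a single Airy-type function, and finally to pin down a definite sign. First I would record the symmetry $z \mapsto -\bar z$: since $\delta \in \er$ one checks that $e^{i(z^3/3-\delta z)}$ satisfies $\overline{e^{i(z^3/3-\delta z)}} = e^{i((-\bar z)^3/3 - \delta(-\bar z))}$, and that this map swaps the two endpoints $\infty\times e^{5i\pi/6}$ and $\infty\times e^{i\pi/6}$ of $\Gamma$ (reversing orientation). Deforming back to $\Gamma$ and using that the integrand is entire then yields
\[
 \mu_k = (-1)^k \overline{\mu_k}, \qquad k \ge 0,
\]
so the even moments are real and the odd ones purely imaginary. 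Setting $\nu_k := (-i)^k \mu_k \in \er$ and factoring $H_n = D\,[\nu_{i+j}]\,D$ with $D = \diag(1,i,\dots,i^{n-1})$ gives $h_n = i^{n(n-1)}\det[\nu_{i+j}]_{i,j=0}^{n-1}$; in particular $h_{2n} = (-1)^n \det[\nu_{i+j}]_{i,j=0}^{2n-1}$ is real, and the theorem becomes the non-vanishing of this \emph{real} Hankel determinant $\tau_{2n} := \det[\nu_{i+j}]_{i,j=0}^{2n-1}$.

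Next I would extract the analytic structure of the sequence $\nu_k$. Differentiating \eqref{eq:moments} in $\delta$ gives $\mu_k' = -i\,\mu_{k+1}$, i.e. $\mu_{k+1} = i\,\mu_k'$ with ${}' = d/d\delta$, hence $\nu_{k+1} = \nu_k'$ and $\nu_k = \phi^{(k)}(\delta)$ for the single function $\phi := \mu_0 = \nu_0$. Up to normalisation $\phi(\delta) = 2\pi\,\Ai(-\delta)$, so by the Airy equation $\phi$ solves the second-order ODE $\phi'' = -\delta\phi$. Consequently the Hankel matrix $[\nu_{i+j}] = [\phi^{(i+j)}]$ is the Wronskian matrix of $\phi,\phi',\dots,\phi^{(N-1)}$, so $\tau_N = W(\phi,\phi',\dots,\phi^{(N-1)})$, and because $\dot\mu_k = \mu_{k+1}$ the $\tau_N$ obey the Toda bilinear relation $\tau_N \tau_N'' - (\tau_N')^2 = \tau_{N-1}\tau_{N+1}$. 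In parallel, Heine's identity together with the substitution $z = iu$ (which sends $\Gamma$ to a contour $C$ from $\infty\times e^{i\pi/3}$ to $\infty\times e^{-i\pi/3}$, turns the weight into $e^{u^3/3+\delta u}$, and contributes a factor $i^{N^2}=1$ when $N=2n$) gives the manifestly real representation
\[
 h_{2n} = \frac{1}{(2n)!}\int_{C^{2n}} \prod_{1\le i<j\le 2n}(u_i-u_j)^2 \prod_{i=1}^{2n} e^{u_i^3/3 + \delta u_i}\,du_i .
\]

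It remains to prove the sign statement $h_{2n} = (-1)^n \tau_{2n} > 0$, equivalently that the integral above is nonzero. I would settle the base case $n=1$ by the Airy Wronskian trick: here $h_2 = \phi'(\delta)^2 + \delta\,\phi(\delta)^2$, and using $\phi'' = -\delta\phi$ one finds $\tfrac{d}{d\delta}h_2 = \phi(\delta)^2 \ge 0$, while $h_2 \to 0$ as $\delta \to -\infty$ (Airy decay); monotonicity then forces $h_2 > 0$ for all real $\delta$. For the general even case I would pursue one of two routes. The first is to show the multiple integral is positive by folding $C$ into its upper and lower rays (which are exchanged by conjugation, with $e^{u^3/3+\delta u}$ carrying real coefficients) and reorganising the $2n$-fold integral over conjugate-paired coordinates into a sum of squared moduli. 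The second is an induction in $n$ driven by the Toda relation together with the boundary asymptotics $\tau_N \to 0$ as $\delta \to -\infty$, propagating the sign pattern $\operatorname{sgn}\tau_{2n} = (-1)^n$.

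The hard part is precisely this last sign determination. It is genuinely an analytic, not merely algebraic, statement: the real Hankel matrix $[\nu_{i+j}]$ is \emph{indefinite} already at $n=1$ (its determinant $\tau_2 = -h_2$ is negative), so $h_{2n}>0$ cannot follow from positive-definiteness of any Gram form, and the symmetry reduction alone does not suffice. The difficulty is that in the integral representation the integrand is genuinely complex along $C$, and one must convert its oscillatory contributions into a definite sign using the specific Airy weight — the obstacle being to make the conjugate-pairing/positivity argument rigorous for all even $2n$, or equivalently to control the zeros of the Toda $\tau$-sequence, for which the clean $n=1$ monotonicity computation is the model to be generalised.
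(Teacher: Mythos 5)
Your reduction is correct and, up to normalisation and the substitution $x=-\delta$, identical to the paper's setup: the reality reduction $h_{2n}=(-1)^n\tau_{2n}$ with $\nu_k=\phi^{(k)}(\delta)$, $\phi(\delta)=2\pi\Ai(-\delta)$, is the paper's identity \eqref{eq:Airydethn} expressing $h_n$ through the Airy--Wronskian determinant $D_n$; your Toda bilinear relation is Lemma \ref{lem:Dnidentity}; your decay as $\delta\to-\infty$ is \eqref{eq:Airydetasymp}; and your base case $\frac{d}{d\delta}h_2=\phi^2$, hence $h_2=\int_{-\infty}^{\delta}\phi(s)^2\,ds>0$, is exactly the paper's computation \eqref{eq:D2formula}. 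But the proof is incomplete, and you say so yourself: neither of your two routes for general $n$ is carried out, and neither works as stated. Route B fails because the bilinear identity $\tau_N\tau_N''-(\tau_N')^2=\tau_{N-1}\tau_{N+1}$ cannot by itself bridge the odd index: the odd determinants genuinely vanish at real points (already $\tau_1=\phi$ vanishes at the zeros of $\Ai$, and Figure \ref{fig:odd_vs_even} exhibits the same phenomenon for higher odd degrees), and at a zero of $\tau_{2n+1}$ the identity yields only the pointwise relation $\tau_{2n}\tau_{2n+2}=-(\tau_{2n+1}')^2\le 0$, which constrains signs at that one point but excludes no zeros of $\tau_{2n+2}$ elsewhere. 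Route A is harder still: the $2n$-fold Heine integrand has no pointwise sign along the complex contour $C$, and you give no mechanism by which conjugate pairing would produce a sum of squared moduli, so it remains the conjecture you acknowledge it to be.

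The missing idea is the paper's Proposition \ref{prop:Airydet}, a first-order identity strictly stronger than Toda: $\left(D_{n+1}/D_{n-1}\right)'=n\left(D_n/D_{n-1}\right)^2$. It does not come from the $\delta$-flow (Toda) alone but from the \emph{string equations} \eqref{eq:string} --- the alternative discrete Painlev\'e I equations satisfied by the recurrence coefficients of these semiclassical polynomials --- combined with the determinantal formulas \eqref{eq:anbn}: in $a_n(b_n+b_{n-1})=n$ the sum $b_n+b_{n-1}$ telescopes to $\left(\log(D_{n+1}/D_{n-1})\right)'$, which gives the identity. Integrating it from $x$ to $+\infty$ with the decay you already established produces \eqref{eq:Dn2integral}, namely $D_{n+2}(x)=-(n+1)\,D_n(x)\int_x^{\infty}\left(D_{n+1}(s)/D_n(s)\right)^2 ds$, and this transfers nonvanishing (together with the alternating sign pattern you conjectured) from $D_n$ directly to $D_{n+2}$, stepping over the troublesome odd determinant entirely --- an induction in steps of two. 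Your $n=1$ monotonicity computation is precisely the first instance of this identity (the paper notes that \eqref{eq:Airydetid2} reduces to \eqref{eq:D2formula} at $n=1$), so you had the right model in hand; what you did not find is that the second string equation is the structural mechanism that generalises it to all $n$.
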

This implies that all monic even-degree polynomials $p_{2n,\delta}(x)$ are free of singularities as a function of $\delta$. Furthermore:

\begin{theorem}\label{thm:smalldelta}
For any $n \in \mathbb{N}$ and $\delta \in (-\infty,\delta_0)$, where $\delta_0\approx 2.338$ is 
the smallest root of $\Ai(-\delta)$, $h_n \neq 0$.
\end{theorem}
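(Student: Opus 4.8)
The plan is to reduce the non-vanishing of $h_n$ to a statement about the Airy function and then to establish that statement by a positivity argument keyed to the sign of $\Ai(-\delta)$.

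First I would identify the moments with Airy derivatives. Deforming the standard representation $\Ai(-\delta)=\frac{1}{2\pi}\int_{-\infty}^{\infty}e^{i(t^3/3-\delta t)}\,dt$ onto $\Gamma$ (the real axis and $\Gamma$ share the asymptotic sectors $\arg z\to\tfrac{\pi}{6},\tfrac{5\pi}{6}$ in which $e^{iz^3/3}$ decays) gives $\mu_0=2\pi\,\Ai(-\delta)$. Differentiating the weight under the integral sign, $\partial_\delta e^{i(z^3/3-\delta z)}=-iz\,e^{i(z^3/3-\delta z)}$, yields $\mu_{k}=i^{k}\mu_0^{(k)}=2\pi(-i)^{k}\Ai^{(k)}(-\delta)$. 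With $D=\diag\bigl((-i)^0,\dots,(-i)^{n-1}\bigr)$ this factors as $H_n=D\,\bigl(2\pi[\Ai^{(i+j)}(-\delta)]_{i,j=0}^{n-1}\bigr)\,D$, and since $(\det D)^2=(-1)^{n(n-1)/2}$,
\[
 h_n=(-1)^{n(n-1)/2}(2\pi)^n\det\bigl[\Ai^{(i+j)}(-\delta)\bigr]_{i,j=0}^{n-1}.
\]
The prefactor is a nonzero sign, so the theorem is equivalent to the non-vanishing of the real Hankel determinant $\det[\Ai^{(i+j)}(-\delta)]$ whenever $\Ai(-\delta)>0$, i.e. for $\delta<\delta_0$.

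Second, for the core non-vanishing I would use the Heine--Andr\'eief representation
\[
 h_n=\frac{1}{n!}\int_\Gamma\!\cdots\!\int_\Gamma\ \prod_{a<b}(z_a-z_b)^2\ \prod_{a=1}^{n}e^{i(z_a^3/3-\delta z_a)}\,dz_a,
\]
and deform each copy of $\Gamma$ to the steepest-descent contour $\Gamma_\ast$ through the saddle of $z^3/3-\delta z$. On $\Gamma_\ast$ the imaginary part of the exponent is constant, so $e^{i(z^3/3-\delta z)}$ is real and positive there, and its zeroth moment is precisely $\mu_0=2\pi\Ai(-\delta)>0$ — this is where the hypothesis $\delta<\delta_0$ enters. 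I would then exploit the reflection symmetry $\overline{e^{i(z^3/3-\delta z)}}=e^{i((-\bar z)^3/3-\delta(-\bar z))}$ (valid for real $\delta$) and the symmetry of $\Gamma_\ast$ about the imaginary axis to fold the contour onto a half-contour, changing variables to a real parameter and turning the multiple integral into an integral against a genuine positive measure with a real Vandermonde factor. This would exhibit $h_n$ as a strictly positive multiple of a classical positive-definite Hankel integral in that real variable, hence nonzero. Note that the positivity is of $h_n$ itself and is consistent with the fact that the Airy-derivative matrix $[\Ai^{(i+j)}(-\delta)]$ is itself indefinite (its signature is absorbed into the sign prefactor $(-1)^{n(n-1)/2}$).

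The main obstacle is exactly this last reduction. Although $e^{i(z^3/3-\delta z)}$ is positive on $\Gamma_\ast$, the line element $dz$ and the differences $z_a-z_b$ remain complex because $\Gamma_\ast$ is curved, so positivity of the integrand does not by itself give a sign-definite integral; the delicate point is to use the symmetry to manufacture a true positive measure and to verify that this folding remains valid exactly as long as $\Ai(-\delta)>0$. As a fallback I would argue by continuity in $\delta$: each $h_n(\delta)$ is entire and real on $\er$, it is nonzero as $\delta\to-\infty$ by the subleading asymptotics of $\Ai^{(k)}(-\delta)$ (the leading term alone makes the determinant degenerate, so this base case itself needs care), and by Theorem~\ref{thm:existence_even} only the odd determinants $h_{2m+1}$ can possibly vanish; it then remains to show that the first zero in $\delta$ of any odd $h_n$ cannot precede the first zero of $\Ai(-\delta)$ at $\delta_0$, for instance through a Dodgson/Sylvester identity relating $h_{n-1},h_n,h_{n+1}$ together with the Airy equation $\Ai''(y)=y\,\Ai(y)$.
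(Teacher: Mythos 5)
Your first step---identifying $\mu_k=2\pi(-i)^k\Ai^{(k)}(-\delta)$ and factoring the phases out of $H_n$ to get $h_n=(2\pi)^n(-1)^{n(n-1)/2}\det\bigl[\Ai^{(i+j)}(-\delta)\bigr]_{i,j=0}^{n-1}$---is correct and is exactly the paper's reduction \eqref{eq:Airydethn}. But the main route through Heine--Andr\'eief plus steepest descent has a genuine gap, and in part of the claimed range the key assertion is simply false. For $0<\delta<\delta_0$ (which the theorem covers) the phase $g(z)=z^3/3-\delta z$ has \emph{two} saddles $\pm\sqrt{\delta}$ with $g(\pm\sqrt{\delta})=\mp\tfrac{2}{3}\delta^{3/2}$, so the steepest-descent deformation of $\Gamma$ consists of two arcs on which the weight equals $e^{\mp\frac{2i}{3}\delta^{3/2}}$ times a positive function: it is not real and positive on the deformed contour, and there is no single saddle carrying $\Gamma$. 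Even for $\delta\le 0$, where the weight is positive on the one relevant curve, you correctly flag the fatal issue yourself: $dz$ and the Vandermonde differences stay complex, and the proposed ``folding'' into a genuine positive measure is never produced. Nor can it be a soft symmetry fact: $h_1=\mu_0=2\pi\Ai(-\delta)$ actually vanishes at $\delta=\delta_0$, and the odd-degree $h_n$ vanish at isolated larger $\delta$ (Figure \ref{fig:odd_vs_even}), so any positivity mechanism would have to degenerate exactly at the Airy zero; your sketch contains no device that does this, and your claim that the hypothesis $\delta<\delta_0$ enters through $\mu_0>0$ misplaces where it must enter.

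Your fallback gestures toward the paper's actual proof but is missing its decisive ingredient. Setting $D_n(x)=\det[\Ai^{(j+k-2)}(x)]_{j,k=1,\dots,n}$, the paper does use the Dodgson/Jacobi-type identity $D_{n-1}D_{n+1}=D_nD_n''-(D_n')^2$, but that alone does not close the argument. The key extra input is weight-specific: the string equations \eqref{eq:string} (an alternative discrete Painlev\'e I for the recurrence coefficients), combined with \eqref{eq:anbn}, yield the identity \eqref{eq:Airydetid2}, $\bigl(D_{n+1}/D_{n-1}\bigr)'=n\,(D_n/D_{n-1})^2$. Together with the $x\to+\infty$ asymptotics \eqref{eq:Airydetasymp} --- which, exactly as you anticipate, cannot come from the degenerate leading term of $\Ai^{(k)}$ but is obtained inductively from the decay $a_n(x)\sim-\tfrac{n}{2}x^{-1/2}$ of the recurrence coefficient --- integration gives \eqref{eq:Dn2integral}: $D_{n+2}(x)=-(n+1)\,D_n(x)\int_x^{\infty}\bigl(D_{n+1}(s)/D_n(s)\bigr)^2\,ds$, whose integral is strictly positive. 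The induction then proceeds in steps of two along two separate chains: from $D_0=1$ (no real zeros, giving Theorem \ref{thm:existence_even}) and from $D_1=\Ai$ (no zeros on $(\iota_1,\infty)$), which is precisely how the threshold $\delta_0=-\iota_1$ enters the odd case. So your fallback has the right skeleton (Sylvester-type relations, realness, behavior as $\delta\to-\infty$, parity splitting), but without the string-equation identity \eqref{eq:Airydetid2} the comparison of the first zero of an odd $h_n$ with $\delta_0$ remains unproven.
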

Here, $\Ai$ is the classical Airy function \cite{NIST:DLMF,Olver:2010:NHMF}. The result shows that all polynomials of any degree exist for all values of $\delta$ on an interval that includes the negative halfline. Of particular interest in practice is that the interior of the interval includes the origin $\delta=0$, a critical value that corresponds to the case where two stationary points coalesce. Hence, as we will see, when two stationary points are sufficiently close, we can find quadrature rules with any desired number of quadrature points.

We denote the roots of $p_{n,\delta}(z)$, if the polynomial exists, by $t_{k,\delta}$, for $k=1,\ldots,n$. Thus, we have $p_{n,\delta}(z) = \prod_{k=1}^n (z-t_{k,\delta})$. As in the real-valued case, the Gaussian quadrature rule corresponds to the exact integral of the interpolating polynomial in the points $t_{k,\delta}$. Using Lagrange interpolation, this leads to a standard expression for Gaussian quadrature weights
\begin{equation}\label{eq:weights_lagrange}
 w_{k,\delta} = \int_\Gamma \frac{p_{n,\delta}(z)}{(z-t_{k,\delta})p_{n,\delta}'(t_{k,\delta})}e^{i \left( \frac{z^3}{3} - \delta z \right)}  {\rm d}z.
\end{equation}
An alternative expression is given further on in \eqref{eq:weights}. The weights are finite if all $t_{k,\delta}$ are distinct, such that the $p_{n,\delta}'(t_{k,\delta})$ does not vanish. This is always the case for real-valued polynomials with strictly positive weight function -- in which case all points are real and distinct and the weights are all positive --  but it is not necessarily so in the complex and oscillatory case.

In order to connect the oscillator of the original integral in \eqref{eq:I} with that of the oscillatory weight in \eqref{eq:orthogonality}, we introduce the scaling $\delta = c \omega^{2/3}$. Finally, we consider the quadrature formula
\begin{equation}\label{eq:rule}
 Q[f]= \frac{1}{\omega^{\frac{1}{3}}} \sum_{k=1}^n w_{k,\delta} f\left( \frac{t_{k,\delta}}{\omega^{1/3}} \right).
\end{equation}
This quadrature rule approximates the contribution of the two stationary points to the original integral \eqref{eq:I}, at least for large $\omega$. This contribution can be singled out by considering the integrand along a path in the complex plane, that is restricted to a neighbourhood of the origin. In order to evaluate the full integral \eqref{eq:I}, one also has to evaluate the contributions of the endpoints. They are computed using the standard numerical steepest descent method as outlined in \S\ref{sect_quad_uniform}, see Fig.~\ref{fig:combined_paths} for a depiction of the line integrals emanating from the endpoints and the contour $\Gamma$.

This leads to the following asymptotic error estimate.

\begin{theorem}\label{thm:error}
Let $f$ be analytic in a disk $D_r$ around the origin of radius $r > 0$. Furthermore, let $\Gamma$ be the concatenation of a straight line from $\infty \times e^{\frac{5i\pi}{6}}$ to $0$, and a straight line from $0$ to $\infty \times e^{\frac{i\pi}{6}}$. Finally, let $\tilde{\Gamma} = \Gamma \cap D_r$ and consider the integral
\begin{equation}\label{eq:Igamma}
 I_{\tilde{\Gamma}}[f] = \int_{\tilde{\Gamma}} f(z)e^{i\omega \left(\frac{z^3}{3}-cz\right)}dz.
\end{equation}

Assume that the polynomial $p_{n,\delta}$ of degree $n$ satisfies \eqref{eq:orthogonality} with $\delta=c \omega^{2/3}$ and that it has $n$ distinct finite roots $t_{n,k}$. For fixed $\delta$ and increasing $\omega$, which implies $c = {\mathcal O}(\omega^{-2/3})$, the error for the quadrature rule $Q[f]$ given by \eqref{eq:rule} is
 \[
  I_{\tilde{\Gamma}}[f] - Q[f] = {\mathcal O}\left(\omega^{-\frac{2n+1}{3}}\right), \qquad \omega \to \infty.
 \]
\end{theorem}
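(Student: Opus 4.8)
The plan is to remove the frequency $\omega$ from the phase by the substitution $z = \omega^{-1/3} u$, which turns both the integral and the quadrature rule into objects associated with the canonical weight $w(u) = e^{i(u^3/3-\delta u)}$ on $\Gamma$. Writing $g(u) := f(\omega^{-1/3} u)$, a direct computation using $\delta = c\omega^{2/3}$ gives
\[
 \omega^{1/3}\bigl(I_{\tilde\Gamma}[f] - Q[f]\bigr) = \int_{\hat\Gamma} g(u)\, w(u)\, du - \sum_{k=1}^n w_{k,\delta}\, g(t_{k,\delta}),
\]
where $\hat\Gamma = \omega^{1/3}\tilde\Gamma$ is the rescaled truncated contour. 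Thus it suffices to show that the bracketed quantity is ${\mathcal O}(\omega^{-2n/3})$, since the prefactor $\omega^{-1/3}$ then produces the claimed order $\omega^{-(2n+1)/3}$. The essential structural ingredient is that the map $g \mapsto \sum_k w_{k,\delta} g(t_{k,\delta})$ is an $n$-point Gaussian rule for $w$ on the full contour $\Gamma$: by the standard division argument, any polynomial of degree at most $2n-1$ can be written as $p_{n,\delta} q + r$ with $\deg q, \deg r \le n-1$; orthogonality \eqref{eq:orthogonality} annihilates the first term, while the interpolatory definition \eqref{eq:weights_lagrange} of the weights integrates $r$ exactly. Hence the rule is exact for all polynomials of degree $\le 2n-1$, provided the roots $t_{k,\delta}$ are distinct, which is assumed.

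Next I would split $f$ at its Taylor polynomial of degree $2n-1$ about the origin, $f(z) = \sum_{j=0}^{2n-1} a_j z^j + r_f(z)$, which induces a corresponding splitting $g = P + \rho$ with $P(u) = \sum_{j=0}^{2n-1} a_j \omega^{-j/3} u^j$ a polynomial of degree $\le 2n-1$ and $\rho(u) = r_f(\omega^{-1/3}u)$. For the polynomial part, exactness of the Gaussian rule on $\Gamma$ gives $\sum_k w_{k,\delta} P(t_{k,\delta}) = \int_\Gamma P\, w\, du$, so the polynomial contribution to the bracket reduces to the tail $-\int_{\Gamma\setminus\hat\Gamma} P\, w\, du$. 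On the two rays of $\Gamma$ one has $\Re(i u^3/3) = -|u|^3/3$, so $|w(u)|$ decays super-exponentially, and since $\Gamma\setminus\hat\Gamma$ consists of the parts with $|u|\ge r\omega^{1/3}$ while the coefficients of $P$ are bounded by the fixed numbers $|a_j|$, this tail is ${\mathcal O}(e^{-r^3\omega/3})$ up to algebraic factors and is therefore negligible compared with any power of $\omega^{-1}$.

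It then remains to estimate the remainder contribution $\int_{\hat\Gamma} \rho\, w\, du - \sum_k w_{k,\delta}\rho(t_{k,\delta})$. Using $r_f(z) = z^{2n} h(z)$ with $h$ analytic in $D_r$, one has $|r_f(z)| \le C|z|^{2n}$ on compact subsets of $D_r$, hence $|\rho(u)| \le C \omega^{-2n/3}|u|^{2n}$ for $u\in\hat\Gamma$ (the thin region near $|z| = r$ again contributing only an exponentially small tail). Since $\int_\Gamma |u|^{2n} |w(u)|\, |du|$ is a finite constant by the super-exponential decay, the integral term is ${\mathcal O}(\omega^{-2n/3})$; and because the nodes $t_{k,\delta}$ and weights $w_{k,\delta}$ are fixed for fixed $\delta$, each $\rho(t_{k,\delta}) = r_f(\omega^{-1/3}t_{k,\delta}) = {\mathcal O}(\omega^{-2n/3})$, so the quadrature term is ${\mathcal O}(\omega^{-2n/3})$ as well. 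Collecting the polynomial and remainder contributions yields the bracket $= {\mathcal O}(\omega^{-2n/3})$ and hence the theorem.

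The main obstacle is conceptual rather than computational: because $f$ is only analytic in $D_r$, the function $g$ is not defined on all of $\Gamma$, so one cannot simply invoke exactness and integrate the Taylor remainder over the full contour. The splitting above is designed precisely to resolve this, as the polynomial part, being entire, may be pushed onto the full contour $\Gamma$ where exactness applies (leaving only a harmless tail), while the remainder part is kept on the truncated contour $\hat\Gamma$, where its $\omega^{-2n/3}$ smallness must be shown to survive integration against the rapidly decaying weight. Verifying that both the contour truncation and the uniform remainder bound are consistent to the same order $\omega^{-2n/3}$ is where the care is needed.
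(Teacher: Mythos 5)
Your proof is correct, and it rests on the same core mechanism as the paper's --- the rescaling $z=\omega^{-1/3}u$ with $\delta=c\omega^{2/3}$, Taylor expansion of $f$ about the origin, and exactness of the $n$-point rule for polynomials of degree up to $2n-1$ --- but it handles the remainder by a genuinely different decomposition, and a more rigorous one. The paper extends $f$ to a smooth function $\tilde f$ on all of $\Gamma$ (so that truncation costs $\mathcal{O}(\omega^{-p})$ for every $p>0$), inserts the \emph{full} Taylor series, and reads off the error from the first surviving term $j=2n$; it explicitly concedes that interchanging the infinite sum with the integral over the unbounded contour is not justified, so its conclusion holds ``in an asymptotic sense'' by analogy with Watson's lemma, and as a byproduct it displays the whole expansion with coefficients $a_j\bigl(\mu_j(\delta)-\sum_k t_{k,\delta}^j w_{k,\delta}\bigr)$. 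You instead split $f$ into its degree-$(2n-1)$ Taylor polynomial plus $r_f(z)=z^{2n}h(z)$: the polynomial part, being entire, is pushed to the full contour where Gaussian exactness applies, at the price of a tail over $\Gamma\setminus\hat\Gamma$ that is exponentially small since $\Re(iu^3/3)=-|u|^3/3$ on the rays and $|u|\ge r\omega^{1/3}$ there (the linear term $e^{\delta\Im u}$ being subdominant for fixed $\delta$); the remainder is bounded directly by $C\omega^{-2n/3}|u|^{2n}$ against the absolutely integrable weight and at the fixed nodes. This finite splitting is precisely the standard device that makes Watson-type arguments rigorous, so your write-up closes the very gap the paper flags; what you give up is the explicit form of the higher-order terms, which the theorem does not require. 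One small repair: the bound $|h(z)|\le C$ and the integrability of $f$ on $\tilde\Gamma$ up to $\partial D_r$ are not guaranteed by analyticity in the open disk, so you should run the whole argument on $D_{r'}$ with $r'<r$ and absorb the annulus $r'\le|z|\le r$ into the exponentially small tail (as you hint); this caveat equally affects the paper's definition of $I_{\tilde\Gamma}[f]$ and is not a defect of your route relative to theirs.
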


In the statement of the theorem, the integral is explicitly localized around the origin in a disk of radius $r>0$ in order to capture the contribution of the stationary points, as mentioned above. The precise value of $r$ is not relevant for the asymptotic result, as long as $r$ is strictly positive: singularities of the integrand away from the real line have only an exponentially small (in $\omega$) effect on the integral. Note that in the described regime the parameter $c$ decreases asymptotically with increasing $\omega$, hence ultimately the two stationary points at $\pm \sqrt{c}$ are inside the disk with fixed radius $r$.

We describe the numerical method in \S\ref{sect_numerical_method}. Numerical examples are included in \S\ref{sect_experiments} and the  experiments in that section highlight several interesting features of the quadrature approach. These features are analyzed and explained theoretically in \S\ref{sect_analysis}, which includes also proofs of Theorems \ref{thm:existence_even} and \ref{thm:smalldelta} above. A brief asymptotic error analysis is carried out in \S\ref{sect_error_analyis}, leading to the proof of Theorem \ref{thm:error}. The construction of the quadrature rule is detailed in \S\ref{sect_construction}. Here, the major complication is the fact that the rule depends on two parameters, $\omega$ and $c$, and has to be computable efficiently on the fly in applications. The applicability of the method is extended to other integrals with coalescing saddles in \S\ref{s:genericphase}, and an example is shown in \S\ref{sect_application} for a problem that attracts current interest in the literature on numerical methods for oscillatory integrals \cite{dominguez2011filonclenshawcurtis,ledoux2012interpolatory,huybrechs2012superinterpolation}.

\section{The numerical method}\label{sect_numerical_method}

The method of steepest descent is one classical way to derive asymptotic expansions for oscillatory integrals, in which the integration path is explicitly deformed into the complex plane. It goes back to Riemann and Cauchy \cite{bleistein1975asymptotic,wong2001asymptotic}. The purpose of the so-called \emph{numerical} method of steepest descent is to evaluate the resulting line integrals numerically, rather than asymptotically, using Gaussian quadrature rules. Our description is based on \cite{huybrechs2006osc1}, but earlier methods similar in spirit have been described in literature before for specific applications (e.g. \cite{todd54eei,gautschi70ecc,chandlerwilde95ecg}). In this section we develop a generalization of this approach to integrals with coalescing saddle points. Path deformation is only implicit in this case, since the contour is determined by connecting the roots of orthogonal polynomials in the complex plane.

\subsection{The numerical method of steepest descent (NSD)}\label{ss:NSD}

In the method of steepest descent for integral \eqref{eq:I}, the path of integration is deformed onto the steepest descent paths for the oscillator $g(x)$. For integral \eqref{eq:I} this results in:
\begin{itemize}
\item a half infinite path integral $\Gamma_{\{a,b\}}$ through each endpoint of the interval $[a,b]$,
\item and a double infinite path $\Gamma_\xi$ through each stationary point $\xi$ of the oscillator $g(x,c)$.
\end{itemize}
For the details of the numerical scheme, we refer to the references \cite{huybrechs2006osc1,deano2009complexgauss}. The paths are such that the weight function $e^{i \omega g(x)}$ is non-oscillatory along these paths. This is achieved by following a level curve of the real part of $g$. For the case of $g(x,c)$, given by \eqref{eq:osc}, there are two stationary points: $\pm \sqrt{c}$. We denote the associated paths by $\Gamma_+$ and $\Gamma_-$. A typical illustration of the deformed paths is shown in Figure~\ref{fig:paths}.

\begin{figure}[t]
\begin{center}
\begin{overpic}[scale=0.4,unit=1mm]{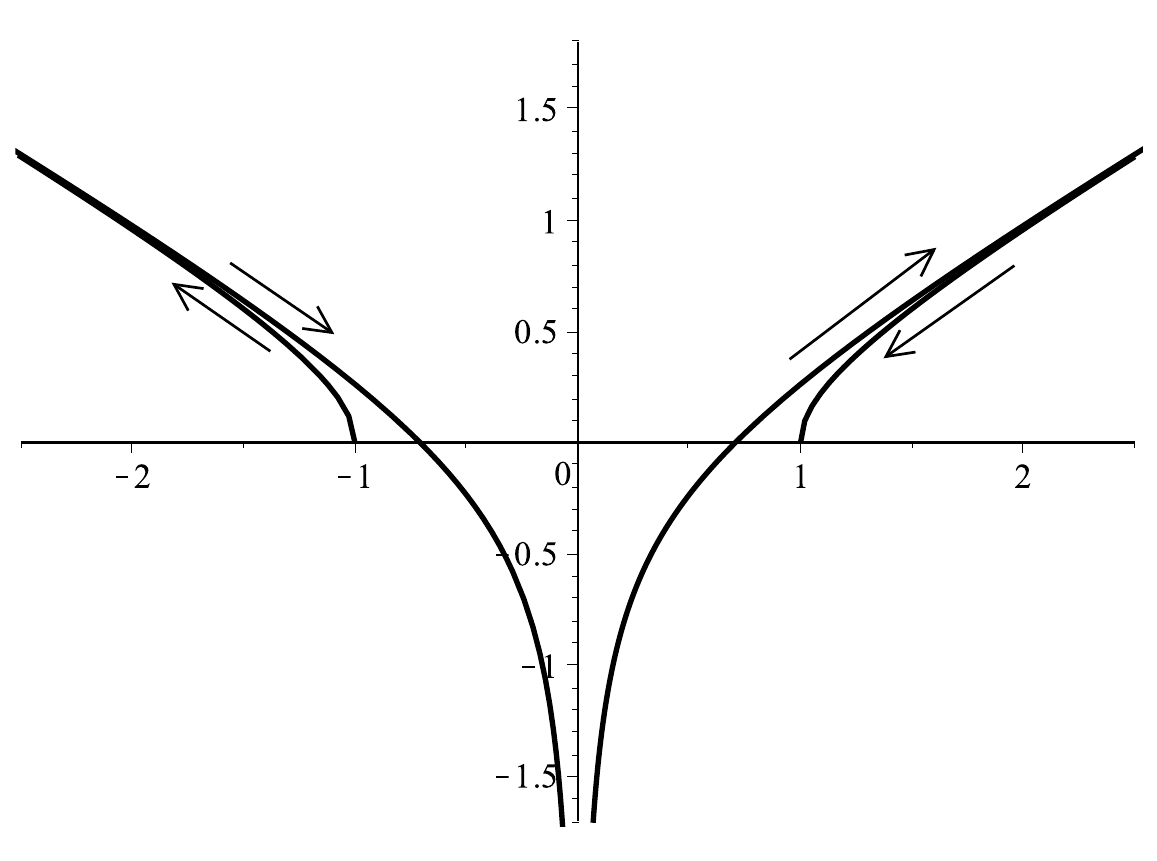}
\put(20,38){$\Gamma_a$}
\put(77,38){$\Gamma_b$}
\put(30,45){$\Gamma_-$}
\put(63,45){$\Gamma_+$}
\end{overpic}
\caption{Steepest descent paths for an oscillatory integral on $[-1,1]$ with two stationary points: two half infinite paths $\Gamma_a$ and $\Gamma_b$ originate from the endpoints $\pm 1$, and two doubly-infinite paths $\Gamma_{+/-}$ pass through the stationary points.}\label{fig:paths}
\end{center}
\end{figure}

The steepest descent integrals can be parameterized in a way that makes them suitable for Gaussian quadrature. In particular, the half-infinite paths can be written in the form 
\[
 \int_0^\infty u(t) e^{-\omega t} {\rm d}t.
\]
Up to a scaling by $\omega$, this integral can be evaluated with Gauss-Laguerre quadrature~\cite{huybrechs2006osc1}. Interestingly, it can be shown that an $n$-point Gauss-Laguerre rule carries an error of the order ${\mathcal O}\left(\omega^{-2n-1}\right)$ for this integral. Truncating the asymptotic expansion of the same integral after $n$ terms leads to an error of size ${\mathcal O}\left(\omega^{-n-1}\right)$. The difference by a factor of nearly two in the exponents is due to the Gaussian nature of the quadrature and is the reason for the (asymptotic) optimality of this approach. For large values of $\omega$, i.e. for very highly oscillatory integrals, the approximation error is likely to be very small even when $n$ is a small number.

Similarly, the doubly-infinite paths can be written in the form (see \cite{deano2009complexgauss})
\begin{equation}\label{eq:hermite}
 \int_{-\infty}^\infty u(t) e^{-\omega t^2} {\rm d}t,
\end{equation}
well suited for Gauss-Hermite quadrature. The error in this case is ${\mathcal O}\left(\omega^{-\frac{2n+1}{2}}\right)$. Here, too, the exponent is twice as large as in the error term of an $n$-term truncated asymptotic expansion. In both cases, the quadrature points correspond to function evaluations of $f$ at points that lie exactly on one of the steepest descent paths shown in Figure \ref{fig:paths}.

However, the integrand $u(t)$ in the integral corresponding to the stationary point at $-\sqrt{c}$ has a singularity in the complex plane, that arises from the other stationary point at $\sqrt{c}$ (and vice-versa). As the parameter $c$ decreases, this singularity of $u(t)$ moves closer towards the real axis in \eqref{eq:hermite}. As such, though the method in principle applies for any $c > 0$, accuracy deteriorates for small $c$ as the convergence rate of Gauss-Hermite quadrature decreases. We illustrate this with a numerical experiment further on. At $c=0$, the saddle points coincide and $g'(x)$ has a double root. In this case, Gauss-Hermite quadrature no longer applies. Like the asymptotic expansions themselves, the numerical method for $c>0$ does not apply to the case $c=0$.

%

\subsection{Uniform asymptotic expansion of oscillatory integrals}\label{ss:uniform}

Uniform a\-symp\-totic expansions for oscillatory integrals with coalescing saddle points are usually formulated in terms of the Airy function and its derivative \cite{bleistein1975asymptotic,olver1974specialfunctions}. The uniform expansion is not unique and variations are possible, but one statement is:
\begin{equation}\label{eq:airy_expansion}
\int_\Gamma f(x)e^{i\omega\left(\frac{x^3}{3}-cx\right)}dx\sim \frac{1}{\omega^{\frac{1}{3}}}\sum_j\left[\frac{a_{2j}}{\omega^{j}}\Ai(-c\omega^{\frac{2}{3}})\right]+\frac{1}{\omega^{\frac{2}{3}}}\sum_j\left[\frac{a_{2j+1}}{\omega^{j}}\Ai'(-c\omega^{\frac{2}{3}})\right].
\end{equation}
Here, the coefficients $a_j$ are determined by $f,c$ and $\omega$ in a complicated way as follows. First, the function $f$ is written in the form $f(x) = (x^2-c) f_1(x) + f_2(x)$. Next, integration by parts is performed for the first term in this sum (note that $(x^2-c) = g'(x,c)$). This results in an Airy function and a new integral along $\Gamma$, after which the process is repeated recursively.

There are several numerical issues with expansion \eqref{eq:airy_expansion}. First, the asymptotic expansion does not necessarily converge. This is true in general: asymptotic expansions rarely converge \cite{boyd1999devil}. One exception is for polynomials $f$, in which case the expansion terminates and the above expression becomes exact. The second issue is less often reported, but equally profound: the numerical computation of the coefficients $a_j$ is exceedingly difficult and the evaluation of the truncated expansion is numerically unstable \cite{temme1997airy}. This is due essentially to the fact that the coefficients $a_j$ are obtained after repeated application of L'H{\^o}spital's rule, leading in practice for small $c$ to extensive cancellation errors.

In spite of its numerical issues, expansion \eqref{eq:airy_expansion} is valid uniformly for $c \in [-C_1,C_2]$ for any $C_1,C_2 > 0$. For large $c$, the Airy functions can be expanded asymptotically and the Poincar{\'e}-type asymptotic expansion of the integral is recovered. For small $c$, the Airy function precisely captures the behaviour of the coalescing saddle points.

\subsection{The uniform numerical method of steepest descent (UNSD)}\label{sect_quad_uniform}

The numerical method of steepest descent starts with an explicit path deformation, followed by the application of a Gaussian quadrature rule. The points of the quadrature rule lie exactly on the steepest descent paths. Here, we reverse the order of the steps. We formulate a Gaussian quadrature rule, one that evaluates the path integrals $\Gamma_+$ and $\Gamma_-$ simultaneously. The roots of the orthogonal polynomials implicitly correspond to a contour in the complex plane. The resulting path deformation is illustrated in Figure~\ref{fig:combined_paths}.

\begin{figure}[t]
\begin{center}
\begin{overpic}[scale=0.5,unit=1mm]{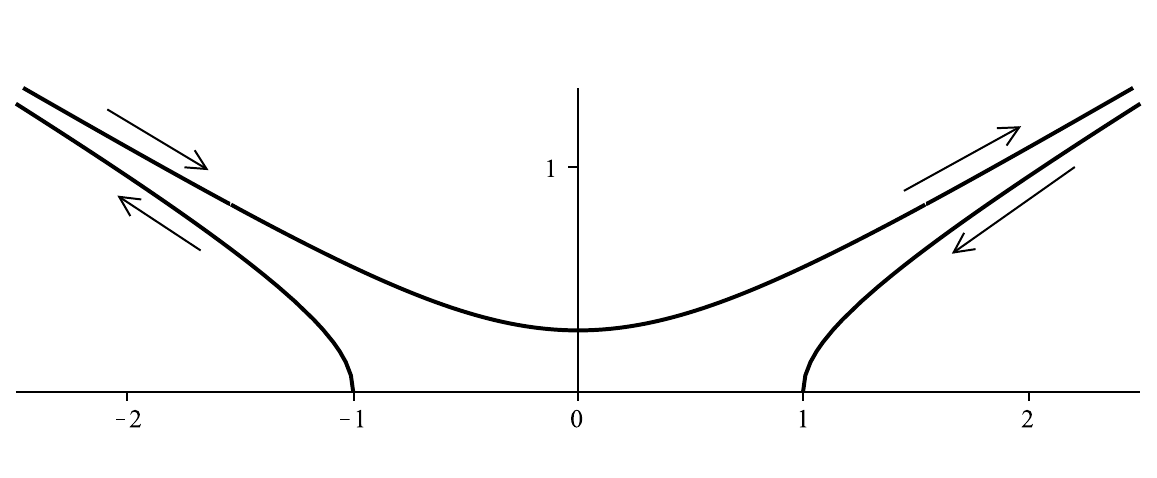}
\put(16,16){$\Gamma_a$}
\put(55,16){$\Gamma$}
\put(94,25){$\Gamma_b$}
\end{overpic}
\caption{The figure shows the paths through the endpoints $\Gamma_a$ and $\Gamma_b$ as in Figure \ref{fig:paths}, but the paths $\Gamma_+$ and $\Gamma_-$ are combined into a single contour $\Gamma$.}\label{fig:combined_paths} 
\end{center}
\end{figure}

Let us be more precise. For the time being, we assume that $f$ is analytic in a sufficiently large region of the complex plane and does not grower faster than exponentially at infinity, such that all path deformations are justified by Cauchy's integral theorem. We assume that the endpoint integrals are treated using Gauss-Laguerre quadrature as before and focus on the doubly-infinite paths from now on. Define $\Gamma$ to be any contour connecting the points $\infty \times e^{\frac{5i\pi}{6}}$ and $\infty \times e^{\frac{i\pi}{6}}$ at infinity, as illustrated in Fig.~\ref{fig:combined_paths}. In particular, we could take $\Gamma$ to be the piecewise linear contour defined in Theorem~\ref{thm:error}, or the union $\Gamma = \Gamma_+ \cup \Gamma_-$ of the steepest descent contours passing through the stationary points (see Fig.~\ref{fig:paths}).

Upon the change of variables
\begin{equation}\label{eq:change_of_variables}
 t=\omega^{\frac{1}{3}}x \qquad \mbox{and}\qquad \delta=c\omega^{\frac{2}{3}},
\end{equation}
we have (recall the definition \eqref{eq:osc} of $g$)
\[
 \int_\Gamma f(x) e^{i \omega g(x,c)} {\rm d}x = \omega^{-1/3} \int_\Gamma f\left( \frac{t}{\omega^{1/3}}\right) e^{i g(t,\delta)} {\rm d}t.
\]
Note that the path of integration does not change under this scaling, assuming $f$ is analytic in a sufficiently large region. The advantage of the latter reformulation is that the oscillator depends on only one parameter, $\delta$, rather than two.

Consider a family of monic orthogonal polynomials $p_{n,\delta}(x)$ satisfying the orthogonality conditions \eqref{eq:orthogonality}. These polynomials should for the time being be considered only formally orthogonal, as their existence is not guaranteed for this oscillatory weight function in the complex plane. Assuming the monic orthogonal polynomial of degree $n$ exists uniquely for some particular value of $\delta$, i.e. $h_{n-1} \neq 0$, there is a corresponding Gaussian quadrature rule with $n$ points and weights. This rule is suitable for weighted integration along $\Gamma$,
\[
 \int_\Gamma u(t) e^{i g(t,\delta)} {\rm d}t \approx \sum_{k=1}^n w_{k,\delta} \, u(t_{k,\delta}).
\]
It is emphasized in this notation that the points and weights depend on the parameter $\delta$. Note that this rule plays the same role as the Laguerre and Hermite rules before.

For the original integral we arrive, after undoing the transformation \eqref{eq:change_of_variables}, at the quadrature rule \eqref{eq:rule}. 
This is the quadrature rule we propose and investigate in this paper. It is denoted in the following by UNSD, for uniform numerical method of steepest descent. Note that any phase function with two stationary points can be mapped to the canonical case $g(x,c)$ by a smooth change of variables.

This quadrature rule can be seen as the numerical equivalent of the uniform asymptotic expansion. It is clear that the rule applies to any value of $c$, including $c=0$. That case corresponds to $\delta=0$ in our notation, and the stationary points coalesce. A special case treatment for such a degenerate stationary point was described before \cite{deano2009complexgauss}, and the current quadrature method simply reduces to that case exactly. As it turns out, for large $c$ the rule above also reduces (numerically) to two individual applications of Gauss-Hermite along the steepest descent paths $\Gamma_+$ and $\Gamma_-$ separately, shown in Fig. \ref{fig:paths}. For large $c$, and thus for large $\delta$, the orthogonal polynomial $p_{2n,\delta}(x)$ of degree $2n$ is close to the product of two Hermite polynomials of degree $n$. Like the uniform asymptotic expansion reduces to the regular Poincar{\'e}-type expansions for large $c$, the uniform numerical scheme reduces to the regular NSD scheme in the same regime.

A disadvantage of the quadrature rule that should be noted is its dependence on the parameter $\delta$. Since this parameter may have any value in applications, the quadrature rule has to be computable on the fly. Thus, unless a priori computations are feasible, the construction of the quadrature rule is an integral part of the cost of the application of the scheme. For phase functions different from \eqref{eq:osc}, the cost of the change of variables has to be taken into account as well.

\section{Numerical experiments}\label{sect_experiments}

Before analyzing the polynomials and the associated quadrature rule, we perform a number of experiments to illustrate the accuracy that can be achieved for varying values of $c$ and $\omega$. We show results for the simple analytic function 
\[
 f(x) = \sin(4x).
\]
This function is simple, yet not entirely innocent as it grows exponentially in the complex plane. This growth, as we shall see, offsets the benefit of steepest descent deformation for small $\omega$. A more interesting example is given in \S\ref{sect_application} later on.

\setlength{\figurewidth}{9cm}
\setlength{\figureheight}{7cm}

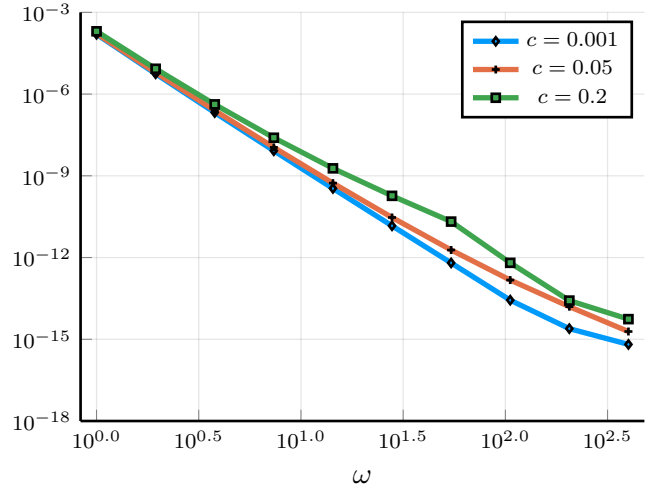
\begin{figure}[t]
\centering
\begin{small}
\begin{tikzpicture}[]
\begin{axis}[height = \figureheight, ylabel = {}, xmin = {0.8354841210835904}, xmax = {478.7643354384942}, ymax = {0.001}, ymode = {log}, xlabel = {$\omega$}, unbounded coords=jump,scaled x ticks = false,xlabel style = {font = {\fontsize{11 pt}{14.3 pt}\selectfont}, color = {rgb,1:red,0.00000000;green,0.00000000;blue,0.00000000}, draw opacity = 1.0, rotate = 0.0},log basis x=10,xmajorgrids = true,xtick = {1.0,3.162277660168379,10.0,31.622776601683793,100.0,316.22776601683796},xticklabels = {$10^{0.0}$,$10^{0.5}$,$10^{1.0}$,$10^{1.5}$,$10^{2.0}$,$10^{2.5}$},xtick align = inside,xticklabel style = {font = {\fontsize{8 pt}{10.4 pt}\selectfont}, color = {rgb,1:red,0.00000000;green,0.00000000;blue,0.00000000}, draw opacity = 1.0, rotate = 0.0},x grid style = {color = {rgb,1:red,0.00000000;green,0.00000000;blue,0.00000000},
draw opacity = 0.1,
line width = 0.5,
solid},axis x line* = left,x axis line style = {color = {rgb,1:red,0.00000000;green,0.00000000;blue,0.00000000},
draw opacity = 1.0,
line width = 1,
solid},scaled y ticks = false,ylabel style = {font = {\fontsize{11 pt}{14.3 pt}\selectfont}, color = {rgb,1:red,0.00000000;green,0.00000000;blue,0.00000000}, draw opacity = 1.0, rotate = 0.0},log basis y=10,ymajorgrids = true,ytick = {1.0e-18,1.0e-15,1.0e-12,1.0e-9,1.0e-6,0.001},yticklabels = {$10^{-18}$,$10^{-15}$,$10^{-12}$,$10^{-9}$,$10^{-6}$,$10^{-3}$},ytick align = inside,yticklabel style = {font = {\fontsize{8 pt}{10.4 pt}\selectfont}, color = {rgb,1:red,0.00000000;green,0.00000000;blue,0.00000000}, draw opacity = 1.0, rotate = 0.0},y grid style = {color = {rgb,1:red,0.00000000;green,0.00000000;blue,0.00000000},
draw opacity = 0.1,
line width = 0.5,
solid},axis y line* = left,y axis line style = {color = {rgb,1:red,0.00000000;green,0.00000000;blue,0.00000000},
draw opacity = 1.0,
line width = 1,
solid},    xshift = 0.0mm,
    yshift = 0.0mm,
    axis background/.style={fill={rgb,1:red,1.00000000;green,1.00000000;blue,1.00000000}}
,legend style = {color = {rgb,1:red,0.00000000;green,0.00000000;blue,0.00000000},
draw opacity = 1.0,
line width = 1,
solid,fill = {rgb,1:red,1.00000000;green,1.00000000;blue,1.00000000},font = {\fontsize{8 pt}{10.4 pt}\selectfont}},colorbar style={title=}, xmode = {log}, ymin = {1.0e-18}, width = \figurewidth]\addplot+ [color = {rgb,1:red,0.00000000;green,0.60560316;blue,0.97868012},
draw opacity = 1.0,
line width = 2,
solid,mark = diamond*,
mark size = 1.5,
mark options = {
    color = {rgb,1:red,0.00000000;green,0.00000000;blue,0.00000000}, draw opacity = 1.0,
    fill = {rgb,1:red,0.00000000;green,0.60560316;blue,0.97868012}, fill opacity = 1.0,
    line width = 1,
    rotate = 0,
    solid
}]coordinates {
(1.0, 0.00015696257686548254)
(1.9458877175763887, 5.5611592194582116e-6)
(3.786479009414647, 2.0972679215525153e-7)
(7.368062997280772, 8.324244293877198e-9)
(14.337423288737728, 3.436685380769483e-10)
(27.899015879248413, 1.4608092752928896e-11)
(54.28835233189811, 6.346352084218202e-13)
(105.63903801010007, 2.726597909670731e-14)
(205.56170656043895, 2.455766100420999e-15)
(399.9999999999999, 6.483277915462723e-16)
};
\addlegendentry{$c=0.001$}
\addplot+ [color = {rgb,1:red,0.88887350;green,0.43564919;blue,0.27812294},
draw opacity = 1.0,
line width = 2,
solid,mark = +,
mark size = 1.5,
mark options = {
    color = {rgb,1:red,0.00000000;green,0.00000000;blue,0.00000000}, draw opacity = 1.0,
    fill = {rgb,1:red,0.88887350;green,0.43564919;blue,0.27812294}, fill opacity = 1.0,
    line width = 1,
    rotate = 0,
    solid
}]coordinates {
(1.0, 0.0001671126957537572)
(1.9458877175763887, 6.184967707545397e-6)
(3.786479009414647, 2.499033726621753e-7)
(7.368062997280772, 1.1047398107423535e-8)
(14.337423288737728, 5.38865841197671e-10)
(27.899015879248413, 2.959049680980068e-11)
(54.28835233189811, 1.89187204836875e-12)
(105.63903801010007, 1.4920880198954041e-13)
(205.56170656043895, 1.5739734535245773e-14)
(399.9999999999999, 1.9271865356071313e-15)
};
\addlegendentry{$c=0.05$}
\addplot+ [color = {rgb,1:red,0.24222430;green,0.64327509;blue,0.30444865},
draw opacity = 1.0,
line width = 2,
solid,mark = square*,
mark size = 1.5,
mark options = {
    color = {rgb,1:red,0.00000000;green,0.00000000;blue,0.00000000}, draw opacity = 1.0,
    fill = {rgb,1:red,0.24222430;green,0.64327509;blue,0.30444865}, fill opacity = 1.0,
    line width = 1,
    rotate = 0,
    solid
}]coordinates {
(1.0, 0.00020169633808661547)
(1.9458877175763887, 8.489838915659575e-6)
(3.786479009414647, 4.1851787851988076e-7)
(7.368062997280772, 2.4963187239550854e-8)
(14.337423288737728, 1.878662114762725e-9)
(27.899015879248413, 1.8399637512078165e-10)
(54.28835233189811, 2.0869916997436647e-11)
(105.63903801010007, 6.400460136232822e-13)
(205.56170656043895, 2.6312569799020003e-14)
(399.9999999999999, 5.487466528429191e-15)
};
\addlegendentry{$c=0.2$}
\end{axis}

\end{tikzpicture}
\end{small}
\caption{Approximation error (absolute error) of the UNSD method as a function of $\omega$ for $f(x)=\sin(4x)$ with a $6$-point rule ($n=6$) and for various values of $c$.}\label{fig:error_omega}
\end{figure}

\subsection{Asymptotic order of convergence}

Our first observation is that the error of the UNSD scheme decreases algebraically with increasing $\omega$. This is illustrated in Figure~\ref{fig:error_omega}. The error decreases as a function of $\omega$ at a similar rate for all $c$, though it appears to be smaller for smaller $c$. Though the number of quadrature points is very modest, only $n=6$, absolute errors on the order of $1e-10$ are reached for $\omega$ as small as $100$.

In contrast, for small values of $\omega$ the error is fairly large. As mentioned above, this is exacerbated by the exponential growth of $f$ in the upper half of the complex plane. However, the lack of convergence for small $\omega$ is typical. This cause is most obvious from the scaling of the roots in \eqref{eq:rule}: the term  $\omega^{-1/3} t_{k,\delta}$ implies that the quadrature points $t_{k,\delta}$ are mapped closer to the real line for increasing $\omega$, but away from the real line for decreasing $\omega$. The rule as formulated here does not have a proper limit $\omega \to 0$. Yet, recall that for small $\omega$ the original integral is non-oscillatory, and a straightforward approach is to evaluate it by other means in this regime.

\subsection{Accuracy for small \texorpdfstring{$c$}{c} and comparison to NSD}

In Figure~\ref{fig:unsd_vs_hermite} we compare the error of the proposed UNSD-method with the usual NSD scheme based on Hermite polynomials described in \S\ref{ss:NSD}, and analyzed in \cite{deano2009complexgauss}. For NSD we evaluate the two line integrals through $\pm \sqrt{c}$ using Gauss-Hermite quadrature with $n$ points each. For UNSD we evaluate their sum using a single quadrature rule with $2n$ points in total. It is clear that the NSD method fails for small $c$: the error blows up as $c$ tends to $0$. In contrast, the error of the UNSD scheme is uniformly small in $c$, and that is the motivation of this work.

\setlength{\figurewidth}{9cm}
\setlength{\figureheight}{7cm}

\begin{figure}[t]
\centering
\begin{small}
\input{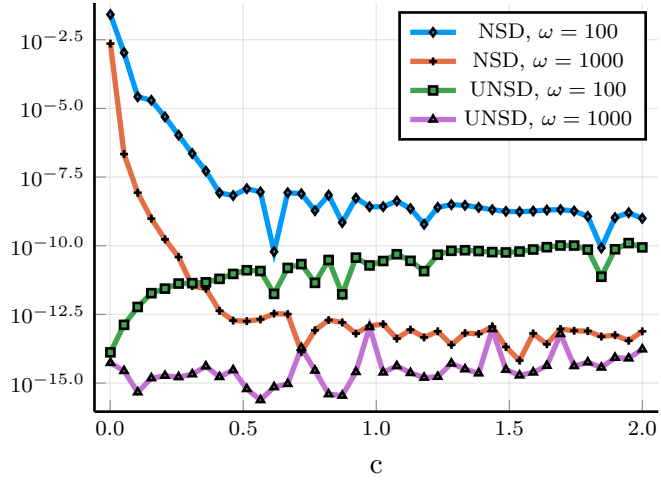}
\end{small}
\caption{A comparison of the accuracy of two applications of the Gauss-Hermite rule versus the UNSD rule, for $n=6$ and $f(x)=\sin (4x)$, as a function of $c$.}\label{fig:unsd_vs_hermite}
\end{figure}

It is also interesting to observe in this figure that the UNSD and NSD rules appear to exhibit very similar errors for large $c$. It turns out that in this regime the quadrature points are nearly the same. We illustrate the location of the quadrature points in Figure~\ref{fig:roots_split}.
The left panel shows the location in the complex plane of the quadrature points for various $\delta$. When $\delta$ is small, the roots lie on a single curve. For larger values of $\delta$, the roots seemingly cluster in two separate curves. These two clusters are illustrated again in the right panel and compared to the quadrature points of the NSD scheme: they are indeed very close to each other.

\setlength{\figurewidth}{6cm}
\setlength{\figureheight}{5cm}

\begin{figure}[!t]
\centering
\subfigure{
\begin{tikzpicture}[]
\begin{axis}[height = \figureheight, ylabel = {}, xmin = {-4.772862382137342}, xmax = {4.772862382137335}, ymax = {2.292367419909967}, xlabel = {}, unbounded coords=jump,scaled x ticks = false,xticklabel style={rotate = 0},xmajorgrids = true,xtick = {-4.0,-2.0,0.0,2.0,4.0},xticklabels = {$-4$,$-2$,$0$,$2$,$4$},xtick align = inside,axis lines* = left,scaled y ticks = false,yticklabel style={rotate = 0},ymajorgrids = true,ytick = {-1.0,0.0,1.0,2.0},yticklabels = {$-1$,$0$,$1$,$2$},ytick align = inside,axis lines* = left,    xshift = 0.0mm,
    yshift = 0.0mm,
    axis background/.style={fill={rgb,1:red,1.00000000;green,1.00000000;blue,1.00000000}}
, ymin = {-1.0174905113928054}, width = \figurewidth]\addplot+[draw=none, color = {rgb,1:red,0.00000000;green,0.60560316;blue,0.97868012},
draw opacity=1.0,
line width=0,
solid,mark = *,
mark size = 2.5,
mark options = {
    color = {rgb,1:red,0.00000000;green,0.00000000;blue,0.00000000}, draw opacity = 1.0,
    fill = {rgb,1:red,0.00000000;green,0.60560316;blue,0.97868012}, fill opacity = 1.0,
    line width = 1,
    rotate = 0,
    solid
},forget plot] coordinates {
(-2.872200228101498, 2.1986921954391336)
(-2.388301590457297, 2.017373351431133)
(-1.9492155807876719, 1.8725888373752413)
(-1.5218843065527712, 1.7533945897094814)
(-1.0938726927841664, 1.6591140060790444)
(-0.6602369751457221, 1.592676063114306)
(-0.220866957347741, 1.5580564466045763)
(0.22086695734773593, 1.5580564466045943)
(0.6602369751457274, 1.5926760631142987)
(1.0938726927841642, 1.6591140060790446)
(1.5218843065527639, 1.753394589709477)
(1.949215580787667, 1.8725888373752386)
(2.388301590457299, 2.017373351431131)
(2.8722002281015055, 2.1986921954391314)
};
\addplot+[draw=none, color = {rgb,1:red,0.88887350;green,0.43564919;blue,0.27812294},
draw opacity=1.0,
line width=0,
solid,mark = diamond*,
mark size = 3.0,
mark options = {
    color = {rgb,1:red,0.00000000;green,0.00000000;blue,0.00000000}, draw opacity = 1.0,
    fill = {rgb,1:red,0.88887350;green,0.43564919;blue,0.27812294}, fill opacity = 1.0,
    line width = 1,
    rotate = 0,
    solid
},forget plot] coordinates {
(-3.0870797274051585, 1.8857191503743405)
(-2.601918775136034, 1.6632088065564297)
(-2.1545269742071333, 1.475442667719378)
(-1.708980905242275, 1.3103622847008598)
(-1.248675498405578, 1.1693478478722357)
(-0.7648811431419795, 1.0615369265715653)
(-0.25823721610424377, 1.0014253793514156)
(0.2582372161042484, 1.001425379351407)
(0.7648811431419767, 1.0615369265715702)
(1.2486754984055755, 1.169347847872244)
(1.7089809052422849, 1.3103622847008625)
(2.1545269742071285, 1.4754426677193797)
(2.601918775136039, 1.6632088065564332)
(3.087079727405159, 1.8857191503743438)
};
\addplot+[draw=none, color = {rgb,1:red,0.24222430;green,0.64327509;blue,0.30444865},
draw opacity=1.0,
line width=0,
solid,mark = +,
mark size = 3.0,
mark options = {
    color = {rgb,1:red,0.00000000;green,0.00000000;blue,0.00000000}, draw opacity = 1.0,
    fill = {rgb,1:red,0.24222430;green,0.64327509;blue,0.30444865}, fill opacity = 1.0,
    line width = 1,
    rotate = 0,
    solid
},forget plot] coordinates {
(3.349500430361023, 1.578981158614562)
(2.8775434182445556, 1.3096255790648816)
(2.4384773283918926, 1.0676439065267238)
(1.9927477483099438, 0.8355115548774927)
(1.5140760604383785, 0.610150508366164)
(0.9732526936601166, 0.4026552849841147)
(0.3428767239122525, 0.2577678204088479)
(-0.34287672391231233, 0.2577678204086882)
(-0.973252693660053, 0.40265528498417313)
(-3.349500430361022, 1.578981158614565)
(-1.5140760604383963, 0.6101505083661197)
(-2.877543418244548, 1.3096255790648799)
(-1.9927477483099334, 0.8355115548775169)
(-2.4384773283918912, 1.0676439065267056)
};
\addplot+[draw=none, color = {rgb,1:red,0.76444018;green,0.44411178;blue,0.82429754},
draw opacity=1.0,
line width=0,
solid,mark = square*,
mark size = 2.5,
mark options = {
    color = {rgb,1:red,0.00000000;green,0.00000000;blue,0.00000000}, draw opacity = 1.0,
    fill = {rgb,1:red,0.76444018;green,0.44411178;blue,0.82429754}, fill opacity = 1.0,
    line width = 1,
    rotate = 0,
    solid
},forget plot] coordinates {
(3.6612399445844783, 1.2953216792094302)
(3.2216460497128088, 0.9827044244046145)
(2.818185383119958, 0.6880978394240281)
(2.4146334160061693, 0.3843916213230487)
(1.9872327891901578, 0.05033664403276669)
(1.503392027902972, -0.3487362693595981)
(0.879892886062967, -0.9238152869219722)
(-0.8798928860629494, -0.9238152869219647)
(-3.6612399445844783, 1.295321679209433)
(-1.503392027903008, -0.34873626935960333)
(-3.2216460497128128, 0.9827044244046246)
(-1.9872327891901413, 0.05033664403279811)
(-2.8181853831199533, 0.6880978394240387)
(-2.414633416006159, 0.38439162132303456)
};
\addplot+[draw=none, color = {rgb,1:red,0.67554396;green,0.55566233;blue,0.09423434},
draw opacity=1.0,
line width=0,
solid,mark = triangle*,
mark size = 3.0,
mark options = {
    color = {rgb,1:red,0.00000000;green,0.00000000;blue,0.00000000}, draw opacity = 1.0,
    fill = {rgb,1:red,0.67554396;green,0.55566233;blue,0.09423434}, fill opacity = 1.0,
    line width = 1,
    rotate = 0,
    solid
},forget plot] coordinates {
(3.9941995485647954, 1.1982581099692882)
(3.5875640294865287, 0.8869145248893621)
(3.220160112606772, 0.5956212260492204)
(2.8600007591815273, 0.29853117845981186)
(2.4894145261299925, -0.022447268156466305)
(2.0893714406487076, -0.39236849718379063)
(1.6227140117616263, -0.8697702291644127)
(-1.6227140117616279, -0.8697702291644078)
(-3.9941995485648043, 1.1982581099692915)
(-2.0893714406487147, -0.3923684971837863)
(-3.5875640294865416, 0.8869145248893658)
(-2.4894145261299916, -0.022447268156476376)
(-3.220160112606776, 0.5956212260492187)
(-2.860000759181517, 0.29853117845980937)
};
\addplot+[draw=none, color = {rgb,1:red,0.00000048;green,0.66575898;blue,0.68099695},
draw opacity=1.0,
line width=0,
solid,mark = pentagon*,
mark size = 3.0,
mark options = {
    color = {rgb,1:red,0.00000000;green,0.00000000;blue,0.00000000}, draw opacity = 1.0,
    fill = {rgb,1:red,0.00000048;green,0.66575898;blue,0.68099695}, fill opacity = 1.0,
    line width = 1,
    rotate = 0,
    solid
},forget plot] coordinates {
(4.502700360506919, 1.0587771012392102)
(4.133151751612235, 0.7517300986563201)
(3.8037616636991816, 0.46744606766056807)
(3.4856397095486455, 0.18141173629528817)
(3.16386421663279, -0.12191812280545768)
(2.823399237835479, -0.462105251631937)
(2.4349603047677584, -0.8821411653847362)
(-2.434960304767757, -0.8821411653847383)
(-4.5027003605069265, 1.0587771012392109)
(-2.823399237835495, -0.46210525163194116)
(-3.163864216632789, -0.1219181228054599)
(-4.133151751612239, 0.7517300986563212)
(-3.8037616636991936, 0.46744606766056934)
(-3.485639709548655, 0.18141173629529173)
};
\end{axis}

\end{tikzpicture}
}
\subfigure{
\begin{tikzpicture}[]
\begin{axis}[height = \figureheight, legend style={at={(0.3,0.8)},anchor=west}, ylabel = {}, xmin = {-4.502146444379752}, xmax = {4.502146444379749}, ymax = {1.1757871656163537}, xlabel = {}, unbounded coords=jump,scaled x ticks = false,xticklabel style={rotate = 0},xmajorgrids = true,xtick = {-4.0,-2.0,0.0,2.0,4.0},xticklabels = {$-4$,$-2$,$0$,$2$,$4$},xtick align = inside,axis lines* = left,scaled y ticks = false,yticklabel style={rotate = 0},ymajorgrids = true,ytick = {-1.0,-0.5,0.0,0.5,1.0},yticklabels = {$-1.0$,$-0.5$,$0.0$,$0.5$,$1.0$},ytick align = inside,axis lines* = left,    xshift = 0.0mm,
    yshift = 0.0mm,
    axis background/.style={fill={rgb,1:red,1.00000000;green,1.00000000;blue,1.00000000}}
, ymin = {-1.2550562067939208}, width = \figurewidth]\addplot+[draw=none, color = {rgb,1:red,0.00000000;green,0.60560316;blue,0.97868012},
draw opacity=1.0,
line width=0,
solid,mark = triangle*,
mark size = 3.0,
mark options = {
    color = {rgb,1:red,0.00000000;green,0.00000000;blue,0.00000000}, draw opacity = 1.0,
    fill = {rgb,1:red,0.00000000;green,0.60560316;blue,0.97868012}, fill opacity = 1.0,
    line width = 1,
    rotate = 180,
    solid
}] coordinates {
(4.247307966395989, 1.1069897116802054)
(3.861750330241342, 0.7941718702202738)
(3.516837501788004, 0.5028504244085757)
(3.182604891528303, 0.20784475290310195)
(2.843452128516614, -0.10732849958959197)
(2.4835046520364497, -0.4640186019480636)
(2.0718537681202944, -0.9099679049304426)
(-2.071853768120329, -0.9099679049304087)
(-4.247307966395993, 1.1069897116802139)
(-2.483504652036414, -0.46401860194809597)
(-3.8617503302413274, 0.7941718702202445)
(-3.5168375017879745, 0.5028504244085478)
(-2.8434521285166294, -0.10732849958959391)
(-3.182604891528285, 0.2078447529031133)
};
\addlegendentry{unsd}
\addplot+[draw=none, color = {rgb,1:red,0.88887350;green,0.43564919;blue,0.27812294},
draw opacity=1.0,
line width=0,
solid,mark = x,
mark size = 3.0,
mark options = {
    color = {rgb,1:red,0.00000000;green,0.00000000;blue,0.00000000}, draw opacity = 1.0,
    fill = {rgb,1:red,0.88887350;green,0.43564919;blue,0.27812294}, fill opacity = 1.0,
    line width = 1,
    rotate = 0,
    solid
}] coordinates {
(2.130828870354901, -1.186258752857781)
(2.5019363351883177, -0.7160836862148421)
(2.8382228628701935, -0.3361616503921381)
(3.162277660168379, 0.0)
(3.4864364504232572, 0.3139516389269259)
(3.824453802473564, 0.6227876816025906)
(4.20520791113596, 0.952818550526416)
(-2.130828870354901, -1.186258752857781)
(-2.5019363351883177, -0.7160836862148421)
(-2.8382228628701935, -0.3361616503921381)
(-3.162277660168379, 0.0)
(-3.4864364504232572, 0.3139516389269259)
(-3.824453802473564, 0.6227876816025906)
(-4.20520791113596, 0.952818550526416)
};
\addlegendentry{hermite}
\end{axis}

\end{tikzpicture}
}

\caption{Illustration of the roots of $p_{n,\delta}$. The left panel shows that the roots seem to lie on a single curve for small $\delta$, but split in two groups for larger $\delta$ ($n=14$, $\delta$ increases from top to bottom with values $\delta=0,2,4,\ldots,12$). The right panel shows that the roots of the UNSD-rule and the NSD rule for a large value of $\delta$ are close to each other ($\delta=10$).}\label{fig:roots_split}
\end{figure}
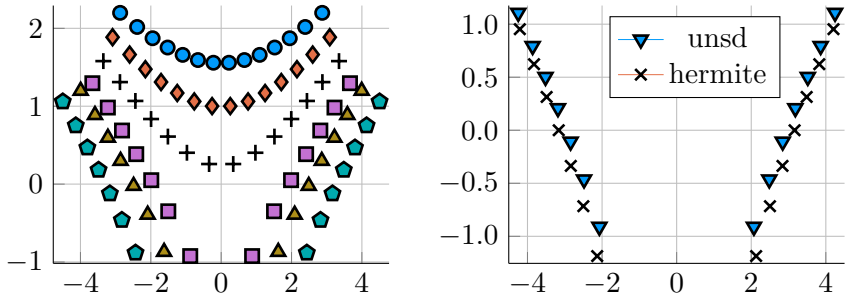

\subsection{Issues with quadrature rules with an odd number of points}

The previous experiments have shown that the proposed UNSD method works well for all values of $c$. However, thus far we have used only quadrature rules with an even number of points. This is based on Theorem \ref{thm:error}, which only guarantees existence of the orthogonal polynomials for even $n$. One quickly observes that there are serious issues with the quadrature rules with an odd number of points. This is illustrated in Figure \ref{fig:odd_vs_even}: the quadrature rules with odd values of $n$ lead to large errors, at least for some isolated values of $c$. In agreement with Theorem \ref{thm:smalldelta} there are no problems of existence for small values of $c$, which is the intended regime for the quadrature rule. Yet, it seems advisable to use only an even number of points in computations in general.

\setlength{\figurewidth}{6cm}
\setlength{\figureheight}{5cm}

\begin{figure}[t]
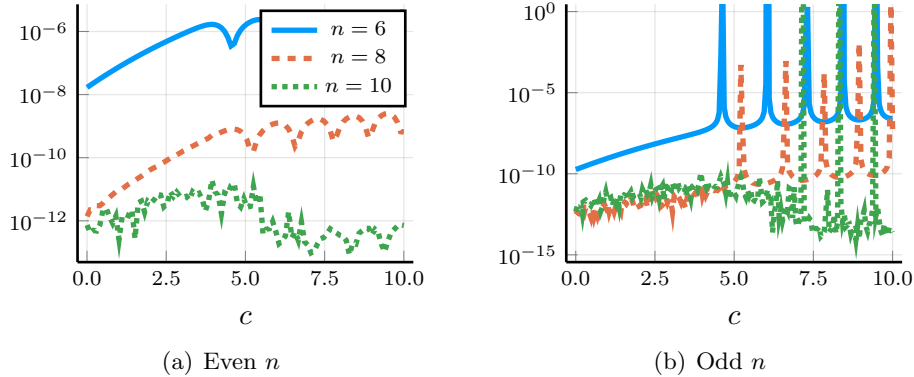

\begin{center}
\subfigure[Even $n$]{
\begin{small}
\input{error_even_n_v2.tikz}
\end{small}
}
\subfigure[Odd $n$]{
\begin{small}
\input{error_odd_n_v2.tikz}
\end{small}
}
\caption{Illustration of the accuracy of the UNSD rule applied to $f(x)=\sin x + \cos x$, for various numbers of points $n$ and as a function of $c$ (using $\omega=1$, hence by \eqref{eq:change_of_variables} $\delta=c$). For odd values of $n$, the quadrature error exhibits large spikes at particular values of $c$. These spikes correspond to non-existence of the underlying orthogonal polynomials. No spikes occur for small values of $c$.}\label{fig:odd_vs_even}
\end{center}

\end{figure}

\section{Proofs of Theorems \ref{thm:existence_even} and \ref{thm:smalldelta} }
\label{sect_analysis}

In this section we establish analytical results that completely describe the features we have observed with the experiments. The proofs rely on the relation of the moments \eqref{eq:moments} with the Airy function and its derivative.

In order to simplify the expressions, as well as to make connections to existing literature on orthogonal polynomials, in this section we adopt the integration contour $C$ of the standard Airy function (defined below), rather than contour $\Gamma$ in \eqref{eq:orthogonality}. We note that $C$ is simply a rotation of $\Gamma$, and the polynomials $P_n(s)$ in this section are related to $p_{n,\delta}$ by
\[
 P_n(s) = i^n p_n^{-x}(-i s).
\]

\subsection{Airy function and determinants}
The Airy function $\Ai$ is the solution of the Airy differential equation $y''(x) = xy(x)$
with the asymptotic behavior
\begin{equation} \label{eq:Airyasymp}
	\Ai(x) = \frac{1}{2\sqrt{\pi}x^{1/4}} e^{-\frac{2}{3} x^{3/2}} \left(1 + O(x^{-3/2})\right)
\end{equation}
as $x \to +\infty$. It has an integral representation
\begin{equation} \label{eq:Airyintegral} 
	\Ai(x) = \frac{1}{2\pi i} \int_C e^{- \frac{1}{3} s^3 + xs} ds 
	\end{equation}
where $C$ is an infinite contour in the complex $s$-plane from $ \infty e^{-2\pi i/3}$
to $\infty e^{2\pi i/3}$. Then 
\[ \Ai^{(k)}(x) = \frac{1}{2\pi i} \int_C s^k e^{- \frac{1}{3} s^3 + xs} ds \]
and a comparison with the definition \eqref{eq:moments} of $\mu_k$ shows that
\begin{equation} \label{eq:Airymuk} 
	\mu_k =  2 \pi (-i)^k \Ai^{(k)}(-\delta).
	\end{equation}

We define $D_0(x) = 1$ and for $n \geq 1$,
\begin{equation} \label{eq:Airydet} 
	D_n(x) = \det \left[ \frac{d^{j+k-2}}{dx^{j+k-2}} \Ai(x) \right]_{j,k=1, \ldots, n}.
	\end{equation}
Then $D_n$ is an entire function, which is related to the determinant $h_n = \det H_n$
as follows.
\begin{lemma}
We have
\begin{equation} \label{eq:Airydethn} 
	h_n = \det H_n =  (2\pi)^n (-1)^{n(n-1)/2}  D_n(-\delta) 
	\end{equation}
\end{lemma}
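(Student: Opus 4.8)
The plan is to read off the relation \eqref{eq:Airymuk} between the moments and derivatives of the Airy function, substitute it into the Hankel matrix $H_n$, and then strip off the elementary prefactors by diagonal row and column scaling, reducing $\det H_n$ to the determinant $D_n(-\delta)$ up to an explicit constant.

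First I would observe that the $(j,k)$ entry of $H_n$ in \eqref{eq:hankel} is $\mu_{j+k-2}$ for $j,k = 1,\ldots,n$, while the $(j,k)$ entry of the matrix in \eqref{eq:Airydet} is $\Ai^{(j+k-2)}(-\delta)$ once we evaluate at $x=-\delta$. Using \eqref{eq:Airymuk} and the splitting $(-i)^{j+k-2} = (-i)^{j-1}(-i)^{k-1}$, each entry factors as
\[
 \mu_{j+k-2} = 2\pi \,(-i)^{j-1}\,(-i)^{k-1}\, \Ai^{(j+k-2)}(-\delta).
\]
In matrix form this reads $H_n = 2\pi\, D A D$, where $D = \diag\!\left((-i)^{0},(-i)^{1},\ldots,(-i)^{n-1}\right)$ absorbs the row and column powers of $-i$, and $A = \left[\Ai^{(j+k-2)}(-\delta)\right]_{j,k=1,\ldots,n}$ is exactly the matrix whose determinant is $D_n(-\delta)$.

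Next I would take determinants. Since each of the $n^2$ entries carries the scalar factor $2\pi$, this contributes $(2\pi)^n$; the diagonal scalings give $\det(DAD) = (\det D)^2 \det A = (\det D)^2 D_n(-\delta)$. It remains to evaluate $\det D = \prod_{j=1}^n (-i)^{j-1} = (-i)^{n(n-1)/2}$, whence $(\det D)^2 = (-i)^{n(n-1)}$. The only point requiring a moment's care is simplifying this power of $-i$: because $n(n-1)$ is always even, $(-i)^{n(n-1)} = \left((-i)^2\right)^{n(n-1)/2} = (-1)^{n(n-1)/2}$. Combining the three factors yields $\det H_n = (2\pi)^n (-1)^{n(n-1)/2} D_n(-\delta)$, which is \eqref{eq:Airydethn}.

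This is essentially a bookkeeping argument, so there is no deep obstacle; the one place to be deliberate is the exponent tracking, namely verifying that the accumulated $(-i)$ factors collapse to the clean sign $(-1)^{n(n-1)/2}$ rather than an unwanted power of $i$. I would also note in passing that $A$ is the value at $x=-\delta$ of the entire function $D_n(x)$, so the identity holds as an identity of entire functions of $\delta$, which is what is needed for the subsequent existence theorems.
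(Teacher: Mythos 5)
Your proposal is correct and follows essentially the same route as the paper: both substitute \eqref{eq:Airymuk} into the Hankel matrix, extract the factor $(2\pi)^n$, and pull the powers $(-i)^{j-1}$ and $(-i)^{k-1}$ out of the rows and columns (the paper phrases this as row/column extraction; your $H_n = 2\pi\, D A D$ with a diagonal matrix $D$ is just a cleaner packaging of the identical computation), arriving at the sign $(-i)^{n(n-1)} = (-1)^{n(n-1)/2}$ exactly as in the paper. Your explicit verification that $n(n-1)$ is even, so that no stray power of $i$ survives, is a detail the paper leaves implicit, and is correct.
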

\begin{proof}
Because of \eqref{eq:hankel} and \eqref{eq:Airymuk} we have
\[ h_n = (2 \pi)^n \det \left[ (-i)^{j+k-2} \Ai^{(j+k-2)}(-\delta) \right]_{j,k=1, \ldots, n}. \]
We take out a factor $(-i)^{j-1}$ out of row $j$ and a factor $(-i)^{k-1}$ out of column $k$
of the determinant, for each $j,k = 1, \ldots, n$.
This results in the factor
\[ \prod_{j=1}^n (-i)^{j-1} \cdot \prod_{k=1}^n (-i)^{k-1} = (-1)^{n(n-1)/2}. \]
and the formula \eqref{eq:Airydethn} follows.
\end{proof}

In view of \eqref{eq:Airydethn}, in order to prove Theorems \ref{thm:existence_even} and \ref{thm:smalldelta} we need to prove that $D_n$ has no real zeros if $n$ is even,
and has no zeros in $(\iota_1, \infty)$, where $\iota_1$ is the largest zero of the Airy function
if $n$ is odd.
Recall that $\Ai$ has an infinite number of zeros, all  negative and simple, that are
usually denoted as $0 > \iota_1 > \iota_2 > \iota_3 > \cdots$.

This claim is easy to verify for small values of $n$. Clearly $D_1 = \Ai$
has no zeros in $(\iota_1, \infty)$. For $n=2$ we have by \eqref{eq:Airydet}
\[ D_2(x) = \Ai(x) \Ai''(x) - \Ai'(x)^2 = x \Ai(x)^2 - \Ai'(x)^2, \]
where we used the Airy differential equation $\Ai''(x) = x \Ai(x)$. Then by 
a simple calculation
\begin{equation} \label{eq:D2formula} 
	D_2'(x) = \Ai(x)^2, 
	\end{equation}
and, since $D_2(x) \to 0$ as $x \to +\infty$, we get
\[  D_2(x) = - \int_x^{\infty} \Ai(s)^2 ds < 0, \qquad \text{for } x \in \mathbb R. \]
Thus $D_2$ has no real zeros indeed.

The proof for general $n$ will follow from certain identities for the Airy 
determinants.

\begin{remark}
\normalfont
The Airy determinant \eqref{eq:Airydet} appears in the work of Forrester and Witte \cite{FW} within the context of random matrix theory, see formula (1.25). It arises in the soft edge scaling limit of the expectation value of the $n$-th power of the characteristic polynomial of a GUE matrix. From this interpretation it is natural that $D_n$ does not vanish if $n$ is even. The quantity $D_2$ is (up to a sign) the density of the Airy point process.
\end{remark}

\begin{remark}
\normalfont
The Airy determinant \eqref{eq:Airydet} is also a building block for the construction of Airy like solutions of the Painlev{\'e} II equation, see for example Theorem 4 in \cite{Clarkson} and references therein.
\end{remark}

\subsection{Identities for Airy determinants}

\begin{lemma} \label{lem:Dnidentity}
We have the differential identity
\begin{equation} \label{eq:Airydetdiffid} 
	D_{n-1} D_{n+1} = D_n D_n'' - (D_n')^2. 
	\end{equation}
\end{lemma}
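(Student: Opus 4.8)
The plan is to recognize the stated identity as the Hirota bilinear (Toda-lattice) form of the equation $\frac{d^2}{dx^2}\log D_n = D_{n-1}D_{n+1}/D_n^2$, and to prove it by exhibiting $D_{n-1}, D_n, D_n', D_n'', D_{n+1}$ all as minors of a single bordered Hankel matrix and then invoking the Desnanot--Jacobi (Sylvester) determinant identity. Throughout I write $a_m = a_m(x) = \Ai^{(m)}(x)$ and use the only structural fact needed, namely $a_m' = a_{m+1}$, so that differentiation acts as an index shift on the entries of the Hankel matrix $[a_{i+j-2}]_{i,j=1}^n$ defining $D_n$.

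First I would compute $D_n'$. Differentiating the determinant column by column, the derivative of column $j$ (entries $a_{i+j-2}$) is the column with entries $a_{i+j-1}$, which for $j \le n-1$ is exactly column $j+1$; that term therefore has two equal columns and vanishes. Only differentiation of the last column survives, giving $D_n' = \tilde D_n$, where $\tilde D_n$ is $D_n$ with its last column replaced by $(a_n, a_{n+1}, \ldots, a_{2n-1})^{T}$. Differentiating $\tilde D_n$ again by columns, the same repeated-column cancellation occurs for all but the last column (one checks the already-shifted last column is compatible with this cancellation), and the surviving term is $D_n'' = \hat D_n$, where $\hat D_n$ is obtained from $D_n$ by shifting both the last column and the last row by one index, the corner entry becoming $a_{2n}$. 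The paper's explicit computation $D_2''(x) = a_0 a_4 - a_2^2$ is the first instance of this.

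The key observation is then that $D_{n-1}$, $D_n$, $D_n' = \tilde D_n$, and $D_n'' = \hat D_n$ are all minors of the single $(n+1)\times(n+1)$ Hankel matrix $H = [a_{i+j-2}]_{i,j=1}^{n+1}$, whose full determinant is $D_{n+1}$: deleting row $n+1$ and column $n+1$ gives $D_n$; deleting rows and columns $\{n,n+1\}$ gives $D_{n-1}$; deleting row $n+1$ and column $n$ (or row $n$ and column $n+1$, by symmetry of $H$) gives $\tilde D_n = D_n'$; and deleting row $n$ and column $n$ gives $\hat D_n = D_n''$. Applying the Desnanot--Jacobi identity to $H$ with the removed rows and columns both taken to be $\{n,n+1\}$ yields
\[
 D_{n+1}\, D_{n-1} = \hat D_n\, D_n - \tilde D_n^{\,2} = D_n D_n'' - (D_n')^2,
\]
which is the claim.

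The main obstacle I anticipate is the bookkeeping in the second step: verifying that differentiating $\tilde D_n$ once more really collapses to the single doubly-shifted determinant $\hat D_n$ (the repeated-column cancellation is slightly less transparent than for $D_n'$ because the last column is already shifted), together with confirming that Desnanot--Jacobi applies with no extraneous sign for the non-corner index choice $\{n,n+1\}$. Both points are routine: the sign is fixed by the identical row/column permutation that moves $\{n,n+1\}$ into corner position, under which the symmetric matrix $H$ and all four relevant minors transform consistently, reducing the statement to the classical corner case; and the $n=2$ instance already confirms the final identity as a pure algebraic relation in the $a_m$.
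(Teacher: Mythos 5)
Your proposal is correct and is essentially the paper's own proof: the paper likewise applies Jacobi's determinant identity (citing Horn--Johnson, formula (0.8.4.1)) to the $(n+1)\times(n+1)$ matrix $A=\left[\frac{d^{j+k-2}}{dx^{j+k-2}}\Ai(x)\right]$, identifying $D_{n+1}=\det A$, $D_n=\det A[n+1;n+1]$, $D_{n-1}=\det A[n\,n+1;n\,n+1]$, $D_n'=\det A[n;n+1]=\det A[n+1;n]$ and $D_n''=\det A[n;n]$, exactly as you do (you merely supply the shift-differentiation details the paper leaves implicit). One small repair to the bookkeeping you yourself flagged: differentiating $\tilde D_n$ a second time \emph{by columns} leaves \emph{two} surviving terms (the derivative of column $n-1$ is the deleted column $(a_{j+n-2})_j$, which is no longer present), so either note that those two terms sum to $\hat D_n$, or—more cleanly—use the Hankel symmetry to rewrite $\tilde D_n$ as the last-\emph{row}-shifted determinant and differentiate the second time by columns (equivalently, differentiate $\tilde D_n$ by rows), which does collapse to the single doubly-shifted term $\hat D_n=\det A[n;n]$.
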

\begin{proof}
This can be proved as in [9, Lemma 6] using the connection with
orthogonal polynomials, and their recurrence coefficients (see also
the proof of Proposition \ref{prop:Airydet}).

However, it can also be proved from Jacobi's identity for determinants,
see \cite[formula (0.8.4.1)]{horn1985}. A special case of this formula is the following. For a square matrix
$A$ we use $A[i;j]$ to denote the matrix obtained from $A$ by deleting
row $i$ and column $j$, and we use $A[ij;kl]$ for the matrix obtained
from $A$ by deleting rows $i$ and $j$ and columns $k$ and $l$,
where $i<j$ and $k< l$. Then
\begin{equation} \label{eq:JacobiId} 
	\det A  \cdot \det A[ij;kl] = \det A[i;k] \cdot \det A[j;l]
	- \det A[i;l] \cdot \det A[j;k]. 
	\end{equation}
Now let $A$ be the $(n+1) \times (n+1)$ matrix
\[ A = \left[ \frac{d^{j+k-2}}{dx^{j+k-2}} \Ai(x) \right]_{j,k=1, \ldots, n+1}. \]
Then $\det A = D_{n+1}$, $\det A[n+1; n+1] = D_n$, 
$\det A[n n+1; n n+1] = D_{n-1}$, $\det A[n; n+1] = \det A[n+1; n] = D_n'$
and $\det A[n;n] = D_n''$ and using this in \eqref{eq:JacobiId}
we obtain \eqref{eq:Airydetdiffid}.
\end{proof}

\begin{lemma} \label{extralemma}
$x \mapsto D_n(x)$ is  not identically zero for $x \in \mathbb C$.
\end{lemma}
\begin{proof} We use the fact that the Wronskian of a finite number of 
analytic functions vanishes identically, if and only if
the functions are linearly dependent. This is 
due to B\^ocher \cite{Bocher} in 1900, but see \cite{Bostan} for a recent proof.

By \eqref{eq:Airydet} $D_n$ is the Wronskian of the functions $\Ai$, $\Ai'$, \ldots, $\Ai^{(n-1)}$. If $D_n$ would be identically zero, then the functions would be linearly dependent, and so there would be $c_1, \ldots, c_{n}$, not all $0$, such that $\sum\limits_{j=1}^{n} c_j \Ai^{(n-j)} \equiv 0$. Then $\Ai$ would be a solution of a homogeneous linear differential equation with constant coefficients. This is impossible, since the only functions that are solutions of such ODEs are  polynomials, exponential functions $x \mapsto e^{\lambda x}$, and finite combinations (sums and products) thereof.
\end{proof}

The differential identity \eqref{eq:Airydetdiffid} is well-known in integrable
systems where it is related to the Toda lattice equations, see e.g. \cite{hirota1987}. However the following identity seems to be new. 
It is specific for the Airy determinants \eqref{eq:Airydet} and it 
reduces to the identity \eqref{eq:D2formula} for $n=1$.

\begin{proposition}\label{prop:Airydet}
For each $n \geq 1$ we have
\begin{equation}  \label{eq:Airydetid2}
	\left( \frac{D_{n+1}}{D_{n-1}} \right)'
		= n \left( \frac{D_n}{D_{n-1}} \right)^2. 
\end{equation}
\end{proposition}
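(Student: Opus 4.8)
The plan is to pass through the theory of (formal) orthogonal polynomials attached to the Airy weight, exactly as announced for Lemma~\ref{lem:Dnidentity}. By the integral representation \eqref{eq:Airyintegral}, the numbers $\Ai^{(m)}(x)$ are the moments of the linear functional $L$ defined by $L[q]=\frac{1}{2\pi i}\int_C q(s)\,e^{-s^3/3+xs}\,ds$, so that $L[s^m]=\Ai^{(m)}(x)$. Let $\pi_n(s)=\pi_n(s;x)$ be the monic formal orthogonal polynomials for $L$, with norms $\gamma_n=L[\pi_n^2]$ and three-term recurrence $\pi_{n+1}=(s-\alpha_n)\pi_n-\beta_n\pi_{n-1}$. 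Since $D_n$ is precisely the Hankel determinant of these moments, the classical determinantal formulas give $\gamma_n=D_{n+1}/D_n$ and $\beta_n=\gamma_n/\gamma_{n-1}=D_{n+1}D_{n-1}/D_n^2$. (Feeding this last expression into \eqref{eq:Airydetdiffid} recovers $\beta_n=(\log D_n)''$, but the proposition needs a further, Airy-specific, input.)

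The next observation is that differentiation in $x$ acts as a Toda flow on the moments: from \eqref{eq:Airyintegral}, $\partial_x e^{-s^3/3+xs}=s\,e^{-s^3/3+xs}$, hence $\frac{d}{dx}L[s^m]=L[s^{m+1}]$. Differentiating $\gamma_n=L[\pi_n^2]$ and using that $\partial_x\pi_n$ has degree $<n$, and is therefore $L$-orthogonal to $\pi_n$, yields $\gamma_n'=L[s\pi_n^2]=\alpha_n\gamma_n$, i.e. $\alpha_n=\frac{d}{dx}\log\gamma_n=(\log D_{n+1})'-(\log D_n)'$. Consequently the combination that will appear below telescopes,
\[
 \alpha_n+\alpha_{n-1}=\frac{D_{n+1}'}{D_{n+1}}-\frac{D_{n-1}'}{D_{n-1}}=\left(\log\frac{D_{n+1}}{D_{n-1}}\right)'.
\]

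The genuinely Airy-specific ingredient, and the main obstacle, is a \emph{string equation} relating the two families of recurrence coefficients. I would derive it by integrating the exact derivative $\frac{d}{ds}\bigl(\pi_n\pi_{n-1}\,e^{-s^3/3+xs}\bigr)$ over $C$; the boundary terms vanish because the weight decays at the ends of $C$, and $w'/w=-s^2+x$. Expanding term by term and invoking orthogonality, the contributions $L[\pi_n\pi_{n-1}']$ and $x\,L[\pi_n\pi_{n-1}]$ drop out, $L[\pi_n'\pi_{n-1}]=n\gamma_{n-1}$ because $\pi_n'=n\pi_{n-1}+(\text{lower degree})$, and $L[s^2\pi_n\pi_{n-1}]=\gamma_n(\alpha_n+\alpha_{n-1})$ after substituting the recurrence for $s\pi_n$ and $s\pi_{n-1}$. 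This leaves the identity
\[
 \beta_n(\alpha_n+\alpha_{n-1})=n,
\]
and it is exactly here, through the quadratic term $-s^2$ in $w'/w$ coming from the cubic exponent, that the special structure of the Airy weight is used.

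Finally I would substitute the dictionary into the string equation. Using $\beta_n=D_{n+1}D_{n-1}/D_n^2$ together with the telescoped form of $\alpha_n+\alpha_{n-1}$,
\[
 n=\beta_n(\alpha_n+\alpha_{n-1})=\frac{D_{n+1}D_{n-1}}{D_n^2}\cdot\frac{D_{n-1}}{D_{n+1}}\left(\frac{D_{n+1}}{D_{n-1}}\right)'=\frac{D_{n-1}^2}{D_n^2}\left(\frac{D_{n+1}}{D_{n-1}}\right)',
\]
which rearranges to \eqref{eq:Airydetid2}. As a consistency check, for $n=1$ this reads $(D_2/D_0)'=(D_2)'=D_1^2=\Ai^2$, reproducing \eqref{eq:D2formula}.
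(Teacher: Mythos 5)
Your proof is correct, and its backbone is the same as the paper's: translate the recurrence coefficients into Airy--Hankel determinants (your $\beta_n$, $\alpha_n$ are the paper's $a_n = D_{n-1}D_{n+1}/D_n^2$ and $b_n = \left(\log \frac{D_{n+1}}{D_n}\right)'$ from \eqref{eq:anbn}) and substitute them into the second string equation $a_n(b_n+b_{n-1})=n$ of \eqref{eq:string}, whose telescoped form yields \eqref{eq:Airydetid2}. The difference is self-containedness: the paper cites the Toda relations and the string equations from the literature (\cite{sogo1993}, \cite{clarkson2016}, \cite{magnus1995}), whereas you derive both needed ingredients from scratch --- the formula $\alpha_n=(\log\gamma_n)'$ by differentiating $\gamma_n = L[\pi_n^2]$ under the Toda flow of moments, and the string equation by integrating the exact derivative $\frac{d}{ds}\left(\pi_n\pi_{n-1}e^{-s^3/3+xs}\right)$ over $C$ (the standard ladder/Freud-type computation, using $w'/w=-s^2+x$). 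Your intermediate evaluations $L[\pi_n'\pi_{n-1}]=n\gamma_{n-1}$ and $L[s^2\pi_n\pi_{n-1}]=\gamma_n(\alpha_n+\alpha_{n-1})$ check out, and the boundary terms indeed vanish since $\Re(s^3)\to+\infty$ along both ends of $C$. Two small remarks: your final telescoped display $n=\frac{D_{n-1}^2}{D_n^2}\left(\frac{D_{n+1}}{D_{n-1}}\right)'$ is in fact the corrected version of the paper's displayed equation, which contains a typo (it shows $\frac{D_n'}{D_n}$ where $\frac{D_{n-1}'}{D_{n-1}}$ should appear, i.e.\ $a_nb_n$ instead of $a_n(b_n+b_{n-1})$); and, like the paper, you implicitly work where the $D_n$ do not vanish, which is harmless because after clearing denominators the identity $D_{n+1}'D_{n-1}-D_{n+1}D_{n-1}'=nD_n^2$ is between entire functions and so extends from any set with a limit point.
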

\begin{proof}

Let $P_n$ be the monic orthogonal polynomial of degree $n$ for the complex weight
$e^{-\frac{1}{3}s^3 + xs}$ (with $x$ as a complex parameter) 
on the contour $C$. That is
\begin{equation} \label{eq:Pnortho} 
	\int_C P_n(s) s^k e^{-\frac{1}{3} s^3 + xs} ds = 0, \qquad k=0, \ldots, n-1.
\end{equation}
The polynomial $P_n$ exists if and only if $D_n(x) \neq 0$.

By Lemma \ref{extralemma} $D_n$ does not vanish identically,
and since it is an entire function, its zero set
\[ Z_n = \{ x \in \mathbb C \mid D_n(x) = 0 \} \]
is an at most countable set without accumulation point in
the complex plane. Same conclusion holds for $Z_{n-1}$
and $Z_{n+1}$. Take a non-empty open set $\Omega \subset 
\mathbb C \setminus (Z_{n-1} \cup Z_n \cup Z_{n+1})$.
Then for $x \in \Omega$ the polynomials $P_{n-1}$, $P_n$
and $P_{n+1}$ exist and they are related by a three term recurrence
\begin{equation}
\label{eq:recurrence_arno}
sP_n(s) = P_{n+1}(s) + b_n P_n(s) + a_n P_{n-1}(s)
\end{equation}
where
\begin{equation} \label{eq:anbn} 
	\begin{aligned}
	a_n & = \frac{D_{n-1} D_{n+1}}{D_n^2},   \\
	b_n & = \frac{D_{n+1}'}{D_{n+1}} - \frac{D_n'}{D_n}.
	\end{aligned}
	\end{equation}
As a function of $x \in \Omega $ they satisfy the Toda differential equations
\begin{equation} \label{eq:anbnToda}
	\begin{aligned}
	\frac{da_n}{dx} & = a_n(b_n-b_{n-1}), \\
	\frac{db_n}{dx} & = a_{n+1}-a_n.
	\end{aligned}
	\end{equation}	
The equations \eqref{eq:anbn} and \eqref{eq:anbnToda} are well-known
and  can be found in various forms in the literature, see e.g.
equations (2.9)  and (2.10) in \cite{sogo1993}, where  $a_n^2$ is used instead of $a_n$ and $\tau_n$
is used for  $D_{n+1}$. The equations \eqref{eq:anbnToda} hold
for recurrence coefficients whenever $x$-dependence of
the orthogonality weight comes in the form $w(s) e^{xs}$.

For the cubic weight $w(s) = e^{-\frac{1}{3} s^3}$ as in \eqref{eq:Pnortho}
the recurrence coefficients satisfy nonlinear difference equations
(sometimes called string equations)
\begin{equation} \label{eq:string}
	\begin{aligned}
	a_n + a_{n+1} & = x - b_n^2, \\
	a_n(b_n+b_{n-1}) & = n.
	\end{aligned}
	\end{equation}
These equations are also given in the recent paper \cite{clarkson2016}, where they were identified as an alternative discrete Painlev\'e I equation. 
Note that the recurrence coefficients in our paper differ by a sign from the ones used by Clarkson et al.
See also the appendix of \cite{vanassche2015}. These are recent papers, but the idea that recurrence coefficients for semi-classical orthogonal polynomials satisfy nonlinear equations 
of Painlev\'e type dates back to at least \cite{magnus1995}.

Inserting \eqref{eq:anbn} into the second string equation \eqref{eq:string}, we find
\begin{equation}
 n = \frac{D_{n-1} D_{n+1}}{D_n^2}  \left( \frac{D_{n+1}'}{D_{n+1}} - \frac{D_n'}{D_n} \right).
\end{equation}
which can be rearranged to give \eqref{eq:Airydetid2} for $x \in \Omega$.
We finally note that both sides of \eqref{eq:Airydetid2}
are analytic in $\mathbb C \setminus Z_{n-1}$, with poles in $Z_{n-1}$. 
Then the identity extends from $x \in \Omega$ to general $x \in \mathbb C$,
by the identity theorem for meromorphic functions.
\end{proof}

\subsection{Behavior for \texorpdfstring{$x \to \infty$}{x to infinity}}

As the last preparation for the proofs of Theorems \ref{thm:existence_even} and \ref{thm:smalldelta}, we have to investigate the behavior of $D_n(x)$ as $x \to +\infty$.

The Airy function $\Ai$ and all its derivatives tend to zero as the argument tends
to infinity along the positive real line. Thus it is clear from \eqref{eq:Airydet} 
that $D_n(x) \to 0$ as $x \to +\infty$. We have the more precise behavior.

\begin{lemma}
We have
\begin{equation} \label{eq:Airydetasymp} 
	D_n(x) =  c_n  x^{-n^2/4}  e^{-\frac{2}{3}n x^{3/2}} \left(1+ O(x^{-3/2}) \right)
\end{equation}
as $x \to +\infty$, with the constant
\[
 c_n = (-1)^{n(n-1)/2}  \frac{\prod_{k=0}^{n-1} k!}{2^{n(n+1)/2} \pi^{n/2}}.
\]
\end{lemma}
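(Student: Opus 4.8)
The plan is to prove \eqref{eq:Airydetasymp} by induction on $n$, using the differential identity of Proposition~\ref{prop:Airydet} to pass from degrees $n-1$ and $n$ to degree $n+1$. Throughout I read the exponential factor as $e^{-\frac{2}{3} n x^{3/2}}$, which for $n=1$ reduces \eqref{eq:Airydetasymp} to the Airy asymptotics \eqref{eq:Airyasymp}. Since the identity \eqref{eq:Airydetid2} is second order (it links three consecutive determinants), I need two base cases: $D_0 \equiv 1$, for which $c_0 = 1$, and $D_1 = \Ai$, for which \eqref{eq:Airyasymp} delivers exactly $c_1 = \tfrac{1}{2\sqrt{\pi}}$ together with the relative error $O(x^{-3/2})$. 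The value $n=2$ can be read off from \eqref{eq:D2formula} as a sanity check.

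For the inductive step I assume \eqref{eq:Airydetasymp} at degrees $n-1$ and $n$. Dividing the two expansions gives
\[
 \frac{D_n(x)}{D_{n-1}(x)} = \frac{c_n}{c_{n-1}} \, x^{-(2n-1)/4} e^{-\frac{2}{3} x^{3/2}} \left(1 + O(x^{-3/2})\right),
\]
where the power follows from $\tfrac{n^2-(n-1)^2}{4} = \tfrac{2n-1}{4}$ and the exponent from $\tfrac23 n - \tfrac23(n-1) = \tfrac23$. Squaring and substituting into \eqref{eq:Airydetid2} yields
\[
 \left(\frac{D_{n+1}}{D_{n-1}}\right)'(x) = n \, \frac{c_n^2}{c_{n-1}^2} \, x^{-(2n-1)/2} e^{-\frac{4}{3} x^{3/2}} \left(1 + O(x^{-3/2})\right).
\]

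Next I would integrate from $x$ to $+\infty$. This needs the boundary fact $\tfrac{D_{n+1}}{D_{n-1}} \to 0$, which I establish by a crude a priori bound: expanding $D_{n+1}$ with the Leibniz formula and using bounds of the form $|\Ai^{(m)}(x)| \le C_m x^{A_m} e^{-\frac23 x^{3/2}}$ gives $|D_{n+1}(x)| \le C x^{\alpha} e^{-\frac23 (n+1) x^{3/2}}$, while the inductive hypothesis makes $D_{n-1}(x)$ of exact order $x^{-(n-1)^2/4} e^{-\frac23(n-1)x^{3/2}}$ and nonvanishing for large $x$; the ratio is thus $O(x^{\beta} e^{-\frac43 x^{3/2}}) \to 0$, so the integration constant is zero. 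Watson's lemma applied to $\int_x^\infty s^a e^{-\frac43 s^{3/2}}\,ds$ (with $\phi(s)=\tfrac43 s^{3/2}$, $\phi'(s)=2s^{1/2}$, so the leading contribution is $\tfrac12 x^{-(2n-1)/2-1/2} e^{-\frac43 x^{3/2}} = \tfrac12 x^{-n} e^{-\frac43 x^{3/2}}$) then gives
\[
 \frac{D_{n+1}(x)}{D_{n-1}(x)} = -\frac{n}{2} \, \frac{c_n^2}{c_{n-1}^2} \, x^{-n} e^{-\frac{4}{3} x^{3/2}} \left(1 + O(x^{-3/2})\right).
\]
Multiplying by the hypothesis for $D_{n-1}$ and combining exponents, the powers of $x$ merge as $-n - \tfrac{(n-1)^2}{4} = -\tfrac{(n+1)^2}{4}$ and the exponentials as $e^{-\frac43 x^{3/2}}e^{-\frac23(n-1)x^{3/2}} = e^{-\frac23(n+1)x^{3/2}}$, producing \eqref{eq:Airydetasymp} at degree $n+1$ with $c_{n+1} = -\tfrac{n}{2}\,c_n^2/c_{n-1}$. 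A direct check confirms the closed form satisfies this recurrence: writing $\prod_{k=0}^{n}k! = n!\prod_{k=0}^{n-1}k!$ reproduces the product, the power of $2$, and the power of $\pi$, and the sign matches because $\tfrac{(n+1)n}{2} \equiv 1 + \tfrac{(n-1)(n-2)}{2} \pmod 2$.

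I expect the main obstacle to be the rigorous control of the error term through the induction, and in particular through the integration step: one must verify that the asymptotic expansion of $\int_x^\infty s^a e^{-\frac43 s^{3/2}}\,ds$ genuinely advances in integer powers of $x^{-3/2}$, so that no intermediate $O(x^{-1/2})$ or $O(x^{-1})$ correction is spuriously generated and the relative $O(x^{-3/2})$ is preserved. Everything else—the evaluation of $c_n$ and the bookkeeping of exponents—is routine. An alternative, more computational route would factor $\tfrac{1}{2\sqrt{\pi}} x^{-1/4} e^{-\frac23 x^{3/2}}$ out of every entry of \eqref{eq:Airydet} and analyze the resulting confluent-Vandermonde-type determinant, whose leading part is rank one; this works but forces one to track several orders of the expansion of each $\Ai^{(m)}$ at once, and is appreciably messier than the inductive argument above.
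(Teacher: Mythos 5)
Your proof is correct, but it takes a genuinely different route from the paper's. The paper's proof is a two-line induction: it writes $D_{n+1} = \frac{D_n^2}{D_{n-1}}\, a_n$ using \eqref{eq:anbn} and then simply \emph{cites} the asymptotics $a_n(x) = -\frac{n}{2}x^{-1/2}\left(1+O(x^{-3/2})\right)$ from formula (5.4.a) of \cite{clarkson2016}, after which the exponent bookkeeping is the same as yours. You instead integrate the differential identity \eqref{eq:Airydetid2} from $x$ to $+\infty$ (justifying the vanishing boundary value by a Leibniz-expansion bound on $D_{n+1}$ and the non-vanishing of $D_{n-1}$ from the inductive hypothesis) and extract the leading behaviour by integration by parts against $e^{-\frac43 s^{3/2}}$; your observation that this expansion advances in integer powers of $x^{-3/2}$, so the relative error survives the integration, is exactly the point that needs checking, and it does check out. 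The net effect is that your argument is self-contained: it re-derives the Clarkson et al.\ recurrence-coefficient asymptotics rather than importing them, at the cost of being longer, and it anticipates precisely the integral representation \eqref{eq:Dn2integral} that the paper deploys later in the proofs of Theorems \ref{thm:existence_even} and \ref{thm:smalldelta}. Your recurrence $c_{n+1} = -\frac{n}{2}\,c_n^2/c_{n-1}$ and its verification against the closed form (including the sign identity $\frac{(n+1)n}{2} \equiv 1 + \frac{(n-1)(n-2)}{2} \pmod 2$) are correct, your base cases $n=0,1$ match the paper's, and you rightly read the exponential factor as $e^{-\frac23 n x^{3/2}}$ --- the statement as printed omits the $x$, which is a typo, as both \eqref{eq:Airyasymp} and the $n=2$ check via \eqref{eq:D2formula} confirm.
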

\begin{proof}
The lemma is true for $n=0$ and $n=1$, see \eqref{eq:Airyasymp}. 

For general $n$, we use $D_{n+1} = \frac{D_n^2}{D_{n-1}}  a_n$,  see \eqref{eq:anbn}, and the fact that
\[
a_n(x) =  - \frac{n}{2} x^{-1/2} \left(1 + O(x^{-3/2})\right)   \quad \text{ as } x \to +\infty
\]
see \cite[formula (5.4.a)]{clarkson2016}.
Then \eqref{eq:Airydetasymp} follows by an induction argument.
\end{proof}

\subsection{Proofs of Theorems \ref{thm:existence_even} and \ref{thm:smalldelta}}

\begin{proof}
From \eqref{eq:Airydetasymp} it follows that
$ \frac{D_{n+2}(x)}{D_{n}(x)}  \to 0$ as $ x \to +\infty$.
Thus, after changing $n \to n+1$ in \eqref{eq:Airydetid2} and integrating, we obtain
\begin{equation} \label{eq:Dn2integral} 
	D_{n+2}(x) = - (n+1) D_n(x) \int_x^{+\infty} \left( \frac{D_{n+1}(s)}{D_n(s)} \right)^2 ds 
	\end{equation}
provided that $D_n$ has no zeros on the interval $[x,\infty)$. 

Now we can easily prove Theorems \ref{thm:existence_even} and \ref{thm:smalldelta} by an induction argument.
It is true that $D_0 =1$ has no zeros on the real line. 
Suppose $n \geq 0$ is even, and assume that $D_n$ has no zeros on the real line.
Then the integral \eqref{eq:Dn2integral} is valid for every $x \in \mathbb R$.
Since $D_{n+1}$ clearly does not vanish identically (this follows from \eqref{eq:Airydetasymp}),
we immediately find that $D_{n+2}$ has no zeros on the real line as well.

For $n=1$, we have $D_1 = \Ai$, and the Airy function has no zeros on $(\iota_1, \infty)$, where $\iota_1$ is the largest zero of $\Ai$.
Suppose $n \geq 1$ is odd, and assume that $D_n$ has no zeros on $(\iota_1, \infty)$.
Then again it follows from \eqref{eq:Dn2integral} that $D_{n+2}(x)$ has no zeros
 for $x > \iota_1$ as well. 

This proves Theorems \ref{thm:existence_even} and \ref{thm:smalldelta}, where
we recall \eqref{eq:Airydethn}.
\end{proof}

The proof in this paper is based on the relation between the Hankel determinant and Airy determinants. An alternative proof, including more detailed analysis of the recurrence coefficients based on the string equations, can be found in \cite{lejon2016phd}.

\section{Asymptotic error analysis of the quadrature method}
\label{sect_error_analyis}

After showing the existence of the orthogonal polynomials, hence of the proposed quadrature rules, we set out to analyze its convergence characteristics. Since, like the NSD scheme itself, the current quadrature scheme is asymptotic, we focus on the behaviour for $\omega \gg 1$. We consider nearly coalescing saddle points, i.e. $0 < c \ll 1$, since that is the intended regime of the quadrature rule. We want to show that the error decays with $\omega$ at an algebraic rate, as observed in the experiments.

To that end, we investigate the $\omega$-dependence of the quadrature error $I_\Gamma[f] - Q[f]$. Recall the definitions \eqref{eq:Igamma} and \eqref{eq:rule}.

\begin{proof}[Proof of Theorem \ref{thm:error}]
In order to avoid stringent analyticity requirements on $f$, the contour $\Gamma$ is in Theorem \ref{thm:error} truncated to a neighbourhood of the origin in which $f$ is analytic. This does not alter the asymptotic behaviour of the integral, as can be confirmed using integration by parts and noting that the endpoints of the finite contour $\tilde{\Gamma} = \Gamma \cup D_r$ are in the sectors of the complex plane where the oscillator $e^{i \omega g(z,c)}$ becomes exponentially small. In the large $\omega$ limit with fixed $\delta$, the quadrature points $\frac{t_{k,\delta}}{\omega^{1/3}}$ in \eqref{eq:rule} are also in $D_r$. Since the quadrature rule is assumed to be well-defined for given $n$ and $\delta$, it can be applied for any $\omega$ and we investigate the large $\omega$ limiting behaviour.

For simplicity, in what follows we assume that $f$ is extended to a smooth $C^\infty$ function $\tilde{f}$ along $\Gamma$ in such a way that the extended integral
\[
 I_\Gamma[\tilde{f}] = \int_\Gamma \tilde{f}(z) e^{i \omega g(z,c)} dz
\]
exists and is finite. Regardless of the particular extension, we have that $I_{\tilde{\Gamma}}[f] - I_\Gamma[\tilde{f}] = {\mathcal O}(\omega^{-p})$ for all $p > 0$. It follows that $I_{\tilde{\Gamma}}[f] - Q[f] = I_\Gamma[\tilde{f}] - Q[f] + {\mathcal O}(\omega^{-p})$, for all $p > 0$.

Using the change of variables \eqref{eq:change_of_variables} and formula \eqref{eq:rule} the approximation error becomes, up to the above asymptotically small error,
\begin{align*}
 I_\Gamma[\tilde{f}]-Q[f] = \frac{1}{\omega^{\frac{1}{3}}} \int_\Gamma \tilde{f}\left(\frac{t}{\omega^{\frac{1}{3}}}\right)e^{i\left(\frac{t^3}{3}-\delta t\right)} {\rm d}t- \frac{1}{\omega^{\frac{1}{3}}} \sum_{k=1}^n  f\left(\frac{t_{k,\delta}}{\omega^{\frac{1}{3}}}\right)w_{k,\delta}.
\end{align*}
Recall that the quadrature points $t_{k,\delta}$ are the roots of the orthogonal polynomials $p_{n,\delta}$ and the coefficients $w_{k,\delta}$ are the associated weights.

Since the quadrature points are zeros of orthogonal polynomials, polynomials up to degree $2n-1$ are integrated exactly. Because $f$ is analytic in the disk $D_r$, it can be expanded in a Taylor series,
\[
f(x)=\sum_{j=0}^\infty a_j x^j, \qquad |x| < r.
\]
Since the first $2n-1$ terms are integrated exactly, we have in an asymptotic sense that
\begin{small}\begin{align*}
I_\Gamma[\tilde{f}]-Q[f]& \sim \frac{1}{\omega^{\frac{1}{3}}}\sum_{j=0}^\infty a_j\int_\Gamma \frac{t^j}{\omega^{\frac{j}{3}}} e^{i\omega g(t,\delta)} {\rm d}t - \frac{1}{\omega^{\frac{1}{3}}} \sum_{j=0}^\infty a_j \sum_{k=1}^n \frac{t_{k,\delta}^j}{\omega^{\frac{j}{3}}}w_{k,\delta} \\
&=\frac{1}{\omega^{\frac{1}{3}}} \sum_{j=2n}^\infty \frac{a_j}{\omega^{\frac{j}{3}}}\left[\int_\Gamma t^j e^{ig(t,\delta)} {\rm d}t - \sum_{k=1}^n t_{k,\delta}^j w_{k,\delta} \right]\\
&=\frac{1}{\omega^{\frac{1}{3}}} \sum_{j=2n}^\infty \frac{a_j}{\omega^{\frac{j}{3}}}\left[\mu_j(\delta)-\sum_{k=1}^n t_{k,\delta}^j w_{k,\delta} \right].
\end{align*}\end{small}
Note that interchanging summation and integration is not justified here, as the Taylor series of $f$ does not converge outside the circle with radius $R$, yet the integration contour is infinite. The result only holds as an asymptotic series, in exactly the same way as in the well-known Watson's Lemma \cite{bleistein1975asymptotic}.

For fixed $\delta$, the moments $\mu_j(\delta)$ and the sum $\sum_{k=1}^n t_{k,\delta}^j w_{k,\delta}$ are independent of $\omega$. Thus, again for \emph{fixed $\delta$} and for increasing $\omega$, we conclude from the size of the first term in the summation that
\[
I_{\tilde{\Gamma}}[f]-Q[f] = \mathcal{O}\left(\omega^{-\frac{2n+1}{3}}\right).
\]
This concludes the proof.
\end{proof}
Note that we have taken a rather simplified approach here. There are two parameters in the original problem, $c$ and $\omega$, and we have chosen to fix $\delta=c \omega^{2/3}$. The case of fixed $\delta$ and increasing $\omega$ implies that $c$ shrinks like $\omega^{-2/3}$. The rate of decay as a function of $\omega$ we arrive at in this regime is exactly the same as in the case where $\delta=0$ \cite{deano2009complexgauss}.

An analysis for the case of fixed $c$ and increasing $\omega$ would be considerably more involved, as this regime implies that $\delta$ grows like $\omega^{2/3}$. Hence, this regime includes the transition from a single cluster of quadrature points to a double cluster. Still, we know the outcome in the regime of larger $c$ as well, from the existing analysis of the NSD scheme. In NSD we can use $n$ Gauss-Hermite points for a single stationary point integral and achieve $\mathcal{O}\left(\omega^{-\frac{2n+1}{2}}\right)$ error. Using $n/2$ points for the first stationary point and $n/2$ points for the second, we achieve $\mathcal{O}\left(\omega^{-\frac{n+1}{2}}\right)$ error in total. Thus, the UNSD scheme transitions from $\mathcal{O}\left(\omega^{-\frac{2n+1}{3}}\right)$ behaviour for small $c$ to 
$\mathcal{O}\left(\omega^{-\frac{n+1}{2}}\right)$ behaviour for large $c$. The error for large $c$ is larger, at least asymptotically, and this effect was clearly visible in Fig.~\ref{fig:unsd_vs_hermite}.




\section{Construction of the quadrature rule}
\label{sect_construction}

The Gaussian quadrature rule in this paper is unconventional because it has points in the complex plane. Unfortunately, this means that standard methods for the computation of Gaussian quadrature rules have to be amended. Two issues we address here are the stable numerical computation of recurrence coefficients, and the accurate computation of quadrature weights from the Jacobi matrix eigenvalue problem. We start with the latter, in some detail in order to properly explain the modification.

\subsection{The complex-symmetric Jacobi matrix}

Usually, for real-valued problems, a Gaussian quadrature rule is found from an eigenvalue problem involving the symmetric Jacobi matrix. This is a tridiagonal matrix that is defined in terms of the three-term recurrence coefficients of the orthogonal polynomials. In this context, the conventional form in which the three-term recurrence relation is expressed (for the \emph{monic} polynomials $p_k$) is
\begin{equation}
\label{eq:recurrence_monic}
 p_{k+1}(x) = (x - \alpha_k) p_k(x) - \beta_k p_{k-1}(x).
\end{equation}
Here, the recurrence coefficients are given by the standard formulas
\begin{equation}\label{eq:alpha_beta}
 \alpha_k = \frac{ (x p_k, p_k) }{(p_k,p_k) } \qquad \mbox{and} \qquad \beta_k = \frac{(p_k,p_k)}{(p_{k-1},p_{k-1})},
\end{equation}
where the bilinear form $(f,g)$ is given by
\begin{equation}\label{eq:bilinear_form}
 (f,g) = \int_\Gamma f(z) g(z) e^{i \left(\frac{z^3}{3}-\delta z \right) } {\rm d}z.
\end{equation}
The complex-valued orthogonal polynomials satisfy a three-term recurrence relation because the orthogonality condition \eqref{eq:orthogonality} is non-hermitian, i.e., we have $(x p, q) = (p, x q)$, rather than $(x p, q) = \overline{(p, x q)}$. Hence, the standard formulas remain valid in the complex case.

Note that the bilinear form \eqref{eq:bilinear_form} is not an inner product, due to the oscillatory weight function, hence the problem of existence of $p_n$.

The corresponding recurrence relation for the normalized polynomials $\pi_k(x)$, normalized in the sense that $(\pi_k,\pi_k) = 1$, is \cite{gautschi2004opq},
\begin{equation}
\label{eq:recurrence_normalized}
 \sqrt{\beta_{k+1}} \pi_{k+1}(x) = (x - \alpha_k) \pi_k(x) - \sqrt{\beta_k} p_{k-1}(x),
\end{equation}
where we set $\beta_0 = \mu_0$. This leads to the complex-symmetric Jacobi matrix,
\begin{align*}
J_n=\begin{pmatrix}
\alpha_0 &\sqrt{\beta_1} &0&0&\ldots &0\\
\sqrt{\beta_1} & \alpha_1 &\sqrt{\beta_2}  &0&\ldots &0\\
0& \sqrt{\beta_2} &\alpha_2 & \sqrt{\beta_3}  &\ldots &0\\
\vdots &\vdots &\vdots & \ddots &\ddots &\vdots\\
0& 0&0&\ldots &\alpha_{n-1}&\sqrt{\beta_n} \\
0& 0&0&\ldots &\sqrt{\beta_n} &\alpha_n
\end{pmatrix}.
\end{align*}
The roots of the polynomial are the eigenvalues of this matrix. The matrix is complex-symmetric, i.e. it is symmetric ($J_n = J_n^T$) but not hermitian ($J_n = J_n^*$). One can show from the string equations (see \eqref{eq:string_beta}-\eqref{eq:string_alpha} further below) that $\alpha_k$ is purely imaginary, and $\beta_k$ is real and positive, hence the square root is well defined \cite{lejon2016phd}.

\subsection{Eigenvalue decomposition and quadrature weights}

The eigenvector $\mathbf{v}_j$ corresponding to eigenvalue $x_j$ is a normalized vector, where each entry is proportional to $\pi_i(x_j)$:
\begin{equation}\label{eq:eigenvector}
 \mathbf{v}_j = c \left[\begin{array}{c}
                   \pi_0(x_j) \\
                   \pi_1(x_j) \\
                   \ldots \\
                   \pi_{n-1}(x_j)
                  \end{array}\right].
\end{equation}
The constant $c$ is such that $\Vert \mathbf{v}_j \Vert = 1$. In the complex-valued case, this means that
\begin{equation}\label{eq:c}
 c = \left( \frac{1}{\sum_{k=0}^{n-1} \pi_k(x_j) \overline{\pi_k(x_j)}} \right)^{1/2}.
\end{equation}
An explicit formula for the corresponding quadrature weight $w_j$ is \cite{gautschi2004opq}
\begin{equation}\label{eq:weights}
 w_j = \frac{1}{\sum_{k=0}^{n-1} \pi_k(x_j)^2},
\end{equation}
involving as above the orthonormal polynomials given by \eqref{eq:recurrence_normalized}.\footnote{Note that one can not in general rule out division by zero in this expression for the weights: the sum of squares in the denominator is not necessarily strictly positive in the complex case, as it is in the real case. An alternative and generally applicable expression for the weights is the weighted integral of the Lagrangian basis polynomials $l_{n,j}(t)$ for polynomial interpolation: $w_j = \int_\Gamma l_{n,j}(t) e^{i g(t,\delta)} {\rm d}t$. Hence, the weights are well defined as long as the Lagrangian polynomials are well defined, and this is the case if all roots $t_{k,\delta}$ are distinct. An issue may arise if the orthogonal polynomial $p_{n,\delta}$ has roots of higher multiplicity. In that case, however, a Gaussian-like quadrature rule would still exist based on Hermite interpolation using derivatives, rather than Lagrangian interpolation. Strictly speaking, we have not excluded that case for the polynomials of this paper. Yet, coalescence of roots was never observed in experiments.}

In the real-valued case, we find the simple relation $w_j = c^2$. Moreover, from $\pi_0(x) = \sqrt{\mu_0}$, we can deduce $c$ and, hence, compute the quadrature weight from the first entry of the eigenvector $\mathbf{v}_j$:
\begin{equation}\label{eq:first_entry}
 w_j = c^2 = \mu_0 v_{j1}^2.
\end{equation}
This is a standard step in the well-known Golub--Welsch algorithm, which has $O(n^2)$ computational complexity overall by exploiting the tridiagonal structure of the Jacobi matrix \cite{golub1969gauss}. The final step \eqref{eq:first_entry} needs modification in the complex case, because the expressions for $w_j$ and $c^2$ are no longer the same due to the appearance of complex conjugates in \eqref{eq:c}.

In the complex case, we can still determine $c = \sqrt{\mu_0} v_{j1}$ from the first entry of the eigenvector $\mathbf{v}_j$. Hence, from \eqref{eq:eigenvector}, we can find the values $\pi_k(x_j)$ and we can explicitly evaluate the sum in \eqref{eq:weights} to find the weight $w_j$. This does require the full eigenvalue decomposition of the Jacobi matrix and leads to $O(n^3)$ computational complexity.

Alternatively, once the points $x_j$ have been determined (in $O(n^2)$ operations as with the Golub--Welsch algorithm), one can evaluate the orthonormal polynomials $\pi_k(x_j)$ using their recurrence relation \eqref{eq:recurrence_normalized}. This requires only $O(n^2)$ operations, and so does the explicit summation of \eqref{eq:weights} for the $n$ quadrature weights. With this modification, the computational cost of the quadrature rule remains $O(n^2)$.

\subsection{Computation of the recurrence coefficients for varying \texorpdfstring{$\delta$}{delta}}

It remains to compute the recurrence coefficients, which is a complication because they depend on the parameter $\delta$. A popular strategy is the Stieltjes algorithm \cite{gautschi2004opq}, which is based on a discrete approximation to the bilinear form \eqref{eq:bilinear_form}. For example, we may use the trapezoidal rule on a parabolic contour $P$ that connects the appropriate points at infinity. However, we found this to be numerically stable only up to moderate values of $n$. The main complication is the growth of the polynomials $p_{n,\delta}$ in the complex plane, which is similar to the growth of real-valued orthogonal polynomials when evaluated away from the real line.

The recurrence coefficients satisfy the so-called string equations, recall \eqref{eq:string}. In the notation of \eqref{eq:recurrence_monic}, these become
\begin{eqnarray}\label{eq:string2}
\beta_{n+1} &=\delta-\beta_n-\alpha_n^2, \label{eq:string_beta} \\
\alpha_{n+1} &=\frac{i(n+1)}{\beta_{n+1}}-\alpha_n. \label{eq:string_alpha}
\end{eqnarray}
These relations are very simple to evaluate. Unfortunately, these relations also quickly become numerically unstable for increasing $n$, with notable loss of accuracy beyond $n=20$.

On the other hand, assuming an accurate evaluation of the recurrence coefficients, the eigenvalue computation of the symmetric Jacobi matrix does appear to be accurate for larger values of $n$. Thus, one can evaluate the simple string equations in high-precision arithmetic, followed by an eigenvalue computation with standard floating point precision.

In practice, we found it convenient to approximate the recurrence coefficients a priori as a function of $\delta$, using Chebyshev expansions on an interval $[0,\delta_{\textrm{max}}]$. These expansions are computed once using the string equations \eqref{eq:string_beta}--\eqref{eq:string_alpha} in high-precision arithmetic, but the resulting expansions (for $\alpha_i(\delta)$ and $\beta_i(\delta)$) can be stored in standard precision. The quadrature rule for a given value of $\delta$ can then be computed on the fly using an efficient eigenvalue computation as described above. Assuming $N$ recurrence coefficients are precomputed, one can compute quadrature rules with any number of points between $1$ and $N$.

\section{Other phase functions with coalescing saddles}
\label{s:genericphase}

The derivation of uniform asymptotic expansions for oscillatory integrals with two coalescing saddle points is based on a smooth map to the cubic oscillator. The map was first investigated by Chester, Friedmann and Ursell \cite{chester1957}, but see also \cite[Chapter VII, \S4]{wong2001asymptotic}, \cite[\S23.4.1]{temme2014} and \cite[\S9.2]{bleistein1975asymptotic}. We include a brief, self-contained description here.

Let $g_\alpha(x)$ be an analytic function of $x$ and of the parameter $\alpha$, and such that $g$ has two stationary points $x_{1,2}(\alpha)$ that coalesce at a critical value, say, $\alpha^*$. We switch to a new variable $t$ such that
\begin{equation}\label{eq:chester}
 g_\alpha(x) = \frac13 t^3 - \delta(\alpha) t + A(\alpha).
\end{equation}
Here, $\delta(\alpha)$ and $A(\alpha)$ are determined such that the transformation is regular uniformly in $\alpha$, at least locally near $\alpha^*$. The saddle points $x_{1,2}(\alpha)$ of $g_\alpha$ must map to the saddle points $t_{1,2}(\alpha) = \mp \sqrt{\delta(\alpha)}$ of the cubic. By \eqref{eq:chester} this leads to the explicit expressions (see \cite[\S2]{chester1957} or \cite[(4.8) and (4.9)]{wong2001asymptotic})
\begin{equation}\label{eq:cubicmap_parameters}
 A(\alpha) = \frac12 g_\alpha(x_1) + \frac12 g_\alpha(x_2), \qquad \frac23 \delta^{3/2}(\alpha) = \frac12 g_\alpha(x_1) - \frac12 g_\alpha(x_2).
\end{equation}
It was shown originally in \cite{chester1957} that a branch of the map exists that is indeed analytic, i.e., for which $\frac{dx}{dt} \neq 0,\infty$.

After the change of variables from $x$ to $t$, the oscillatory integral involving a generic phase $g_\alpha(z)$ reduces to our canonical form,
\begin{equation}\label{eq:I_change}
 \int_{\Gamma_x} f(x) e^{i g_\alpha(x)} {\rm d}x = e^{i A(\alpha)} \int_{\Gamma_t} F(t) e^{i \left(\frac{t^3}{3} - \delta(\alpha)t\right)} {\rm d}t,
\end{equation}
with $F(t) = f(x) \frac{dx}{dt}$ and where the contour $\Gamma_t$ is the image of $\Gamma_x$.

It remains to compute the map and its jacobian. We can do so numerically using Newton-Raphson iterations, if a suitable initial guess can be determined. Using the formulas \eqref{eq:cubicmap_parameters}, an initial guess is readily obtained at least in a neighbourhood of the critical points. The jacobian of the map follows from differentiating the left and right hand sides of \eqref{eq:chester},
\begin{equation}\label{eq:jacobian}
 \frac{dx}{dt} = \frac{t^2 - \delta(\alpha)}{g_\alpha'(x)}.
\end{equation}
We illustrate the approach with a numerical example in the next section. An alternative numerical approach may be based on numerically integrating \eqref{eq:jacobian} starting from the critical points, but we found the simpler Newton-Raphson approach sufficient in our example.

Finally, we remark that \eqref{eq:cubicmap_parameters} can be seen as a generalization of \eqref{eq:change_of_variables}. Indeed, in the specific case where the oscillator is given by $g(x) = \omega \left(\frac{x^3}{3} - c x\right)$, \eqref{eq:cubicmap_parameters} gives rise to the substitutions $\delta = c \omega^{2/3}$ and $t = \omega^{1/3}x$, in agreement with \eqref{eq:change_of_variables}.

\section{Example: the computation of Chebyshev moments}
\label{sect_application}


\setlength{\figurewidth}{5.8cm}
\setlength{\figureheight}{5cm}

\begin{figure}[t]
\centering
\subfigure[$n=4$]{
\begin{small}
\begin{tikzpicture}[]
\begin{axis}[height = \figureheight, ylabel = {}, xmin = {8.709635899560805}, xmax = {1148.1536214968828}, ymax = {1.0e-5}, ymode = {log}, xlabel = {k}, unbounded coords=jump,scaled x ticks = false,xlabel style = {font = {\fontsize{11 pt}{14.3 pt}\selectfont}, color = {rgb,1:red,0.00000000;green,0.00000000;blue,0.00000000}, draw opacity = 1.0, rotate = 0.0},log basis x=10,xmajorgrids = true,xtick = {10.0,31.622776601683793,100.0,316.22776601683796,1000.0},xticklabels = {$10^{1.0}$,$10^{1.5}$,$10^{2.0}$,$10^{2.5}$,$10^{3.0}$},xtick align = inside,xticklabel style = {font = {\fontsize{8 pt}{10.4 pt}\selectfont}, color = {rgb,1:red,0.00000000;green,0.00000000;blue,0.00000000}, draw opacity = 1.0, rotate = 0.0},x grid style = {color = {rgb,1:red,0.00000000;green,0.00000000;blue,0.00000000},
draw opacity = 0.1,
line width = 0.5,
solid},axis x line* = left,x axis line style = {color = {rgb,1:red,0.00000000;green,0.00000000;blue,0.00000000},
draw opacity = 1.0,
line width = 1,
solid},scaled y ticks = false,ylabel style = {font = {\fontsize{11 pt}{14.3 pt}\selectfont}, color = {rgb,1:red,0.00000000;green,0.00000000;blue,0.00000000}, draw opacity = 1.0, rotate = 0.0},log basis y=10,ymajorgrids = true,ytick = {1.0e-15,3.162277660168379e-13,1.0e-10,3.162277660168379e-8,1.0e-5},yticklabels = {$10^{-15.0}$,$10^{-12.5}$,$10^{-10.0}$,$10^{-7.5}$,$10^{-5.0}$},ytick align = inside,yticklabel style = {font = {\fontsize{8 pt}{10.4 pt}\selectfont}, color = {rgb,1:red,0.00000000;green,0.00000000;blue,0.00000000}, draw opacity = 1.0, rotate = 0.0},y grid style = {color = {rgb,1:red,0.00000000;green,0.00000000;blue,0.00000000},
draw opacity = 0.1,
line width = 0.5,
solid},axis y line* = left,y axis line style = {color = {rgb,1:red,0.00000000;green,0.00000000;blue,0.00000000},
draw opacity = 1.0,
line width = 1,
solid},    xshift = 0.0mm,
    yshift = 0.0mm,
    axis background/.style={fill={rgb,1:red,1.00000000;green,1.00000000;blue,1.00000000}}
,legend style = {color = {rgb,1:red,0.00000000;green,0.00000000;blue,0.00000000},
draw opacity = 1.0,
line width = 1,
solid,fill = {rgb,1:red,1.00000000;green,1.00000000;blue,1.00000000},font = {\fontsize{8 pt}{10.4 pt}\selectfont}},colorbar style={title=}, xmode = {log}, ymin = {1.0e-15}, width = \figurewidth]\addplot+ [color = {rgb,1:red,0.00000000;green,0.60560316;blue,0.97868012},
draw opacity = 1.0,
line width = 2,
solid,mark = diamond*,
mark size = 1.5,
mark options = {
    color = {rgb,1:red,0.00000000;green,0.00000000;blue,0.00000000}, draw opacity = 1.0,
    fill = {rgb,1:red,0.00000000;green,0.60560316;blue,0.97868012}, fill opacity = 1.0,
    line width = 1,
    rotate = 0,
    solid
},forget plot]coordinates {
(10.000000000000002, 2.516353892923855e-7)
(16.68100537200059, 5.072349546740066e-7)
(27.825594022071247, 4.702456324655572e-7)
(46.415888336127786, 6.606871992435806e-8)
(77.4263682681127, 2.6086480071106027e-8)
(129.15496650148842, 1.4812809546983616e-8)
(215.44346900318828, 5.438690340315867e-9)
(359.38136638046274, 1.9731604332803265e-9)
(599.4842503189407, 7.045323403432953e-10)
(999.9999999999998, 2.5980106338170857e-10)
};
\addplot+ [color = {rgb,1:red,0.88887350;green,0.43564919;blue,0.27812294},
draw opacity = 1.0,
line width = 2,
solid,mark = +,
mark size = 1.5,
mark options = {
    color = {rgb,1:red,0.00000000;green,0.00000000;blue,0.00000000}, draw opacity = 1.0,
    fill = {rgb,1:red,0.88887350;green,0.43564919;blue,0.27812294}, fill opacity = 1.0,
    line width = 1,
    rotate = 0,
    solid
},forget plot]coordinates {
(10.000000000000002, 1.0822875873271418e-8)
(16.68100537200059, 3.321360362444935e-8)
(27.825594022071247, 2.754906010464148e-8)
(46.415888336127786, 2.6748027149368633e-8)
(77.4263682681127, 7.812672254155632e-8)
(129.15496650148842, 7.299092078602733e-9)
(215.44346900318828, 2.884203706474628e-9)
(359.38136638046274, 1.7234734108276357e-9)
(599.4842503189407, 3.731206323636671e-10)
(999.9999999999998, 1.2569370685392715e-10)
};
\addplot+ [color = {rgb,1:red,0.24222430;green,0.64327509;blue,0.30444865},
draw opacity = 1.0,
line width = 2,
solid,mark = square*,
mark size = 1.5,
mark options = {
    color = {rgb,1:red,0.00000000;green,0.00000000;blue,0.00000000}, draw opacity = 1.0,
    fill = {rgb,1:red,0.24222430;green,0.64327509;blue,0.30444865}, fill opacity = 1.0,
    line width = 1,
    rotate = 0,
    solid
},forget plot]coordinates {
(10.000000000000002, 5.966139454245803e-8)
(16.68100537200059, 2.0409939496439397e-9)
(27.825594022071247, 2.3755556445392624e-9)
(46.415888336127786, 1.1961414104319763e-9)
(77.4263682681127, 4.2053709576532427e-10)
(129.15496650148842, 1.2934062116872877e-10)
(215.44346900318828, 3.715729957802627e-11)
(359.38136638046274, 1.029778208990083e-11)
(599.4842503189407, 2.7844421939513757e-12)
(999.9999999999998, 7.441009345075821e-13)
};
\addplot+ [color = {rgb,1:red,0.76444018;green,0.44411178;blue,0.82429754},
draw opacity = 1.0,
line width = 2,
solid,mark = triangle*,
mark size = 1.5,
mark options = {
    color = {rgb,1:red,0.00000000;green,0.00000000;blue,0.00000000}, draw opacity = 1.0,
    fill = {rgb,1:red,0.76444018;green,0.44411178;blue,0.82429754}, fill opacity = 1.0,
    line width = 1,
    rotate = 0,
    solid
},forget plot]coordinates {
(10.000000000000002, 7.017599160696175e-8)
(16.68100537200059, 9.061488943123742e-9)
(27.825594022071247, 7.441646284953662e-10)
(46.415888336127786, 3.013672421369384e-11)
(77.4263682681127, 2.9448226418633345e-11)
(129.15496650148842, 7.915533132855273e-12)
(215.44346900318828, 1.566889521292974e-12)
(359.38136638046274, 2.8681380911471083e-13)
(599.4842503189407, 1.161880770121101e-13)
(999.9999999999998, 2.0969169130268262e-13)
};
\end{axis}

\end{tikzpicture}
\end{small}
}
\subfigure[$n=8$]{
\begin{small}
\begin{tikzpicture}[]
\begin{axis}[height = \figureheight, ylabel = {}, xmin = {8.709635899560805}, xmax = {1148.1536214968828}, ymax = {1.0e-5}, ymode = {log}, xlabel = {k}, unbounded coords=jump,scaled x ticks = false,xlabel style = {font = {\fontsize{11 pt}{14.3 pt}\selectfont}, color = {rgb,1:red,0.00000000;green,0.00000000;blue,0.00000000}, draw opacity = 1.0, rotate = 0.0},log basis x=10,xmajorgrids = true,xtick = {10.0,31.622776601683793,100.0,316.22776601683796,1000.0},xticklabels = {$10^{1.0}$,$10^{1.5}$,$10^{2.0}$,$10^{2.5}$,$10^{3.0}$},xtick align = inside,xticklabel style = {font = {\fontsize{8 pt}{10.4 pt}\selectfont}, color = {rgb,1:red,0.00000000;green,0.00000000;blue,0.00000000}, draw opacity = 1.0, rotate = 0.0},x grid style = {color = {rgb,1:red,0.00000000;green,0.00000000;blue,0.00000000},
draw opacity = 0.1,
line width = 0.5,
solid},axis x line* = left,x axis line style = {color = {rgb,1:red,0.00000000;green,0.00000000;blue,0.00000000},
draw opacity = 1.0,
line width = 1,
solid},scaled y ticks = false,ylabel style = {font = {\fontsize{11 pt}{14.3 pt}\selectfont}, color = {rgb,1:red,0.00000000;green,0.00000000;blue,0.00000000}, draw opacity = 1.0, rotate = 0.0},log basis y=10,ymajorgrids = true,ytick = {1.0e-15,3.162277660168379e-13,1.0e-10,3.162277660168379e-8,1.0e-5},yticklabels = {$10^{-15.0}$,$10^{-12.5}$,$10^{-10.0}$,$10^{-7.5}$,$10^{-5.0}$},ytick align = inside,yticklabel style = {font = {\fontsize{8 pt}{10.4 pt}\selectfont}, color = {rgb,1:red,0.00000000;green,0.00000000;blue,0.00000000}, draw opacity = 1.0, rotate = 0.0},y grid style = {color = {rgb,1:red,0.00000000;green,0.00000000;blue,0.00000000},
draw opacity = 0.1,
line width = 0.5,
solid},axis y line* = left,y axis line style = {color = {rgb,1:red,0.00000000;green,0.00000000;blue,0.00000000},
draw opacity = 1.0,
line width = 1,
solid},    xshift = 0.0mm,
    yshift = 0.0mm,
    axis background/.style={fill={rgb,1:red,1.00000000;green,1.00000000;blue,1.00000000}}
,legend style = {color = {rgb,1:red,0.00000000;green,0.00000000;blue,0.00000000},
draw opacity = 1.0,
line width = 1,
solid,fill = {rgb,1:red,1.00000000;green,1.00000000;blue,1.00000000},font = {\fontsize{8 pt}{10.4 pt}\selectfont}},colorbar style={title=}, xmode = {log}, ymin = {1.0e-15}, width = \figurewidth]\addplot+ [color = {rgb,1:red,0.00000000;green,0.60560316;blue,0.97868012},
draw opacity = 1.0,
line width = 2,
solid,mark = diamond*,
mark size = 1.5,
mark options = {
    color = {rgb,1:red,0.00000000;green,0.00000000;blue,0.00000000}, draw opacity = 1.0,
    fill = {rgb,1:red,0.00000000;green,0.60560316;blue,0.97868012}, fill opacity = 1.0,
    line width = 1,
    rotate = 0,
    solid
}]coordinates {
(10.000000000000002, 5.085566270533628e-12)
(16.68100537200059, 1.5370360937454182e-12)
(27.825594022071247, 4.091597906291278e-14)
(46.415888336127786, 1.4028183848321857e-13)
(77.4263682681127, 1.952663890667564e-14)
(129.15496650148842, 1.1491099584086035e-14)
(215.44346900318828, 1.5786070872004991e-13)
(359.38136638046274, 4.117468984258728e-13)
(599.4842503189407, 5.372780076291311e-13)
(999.9999999999998, 4.689044673813537e-12)
};
\addlegendentry{m=0.8}
\addplot+ [color = {rgb,1:red,0.88887350;green,0.43564919;blue,0.27812294},
draw opacity = 1.0,
line width = 2,
solid,mark = +,
mark size = 1.5,
mark options = {
    color = {rgb,1:red,0.00000000;green,0.00000000;blue,0.00000000}, draw opacity = 1.0,
    fill = {rgb,1:red,0.88887350;green,0.43564919;blue,0.27812294}, fill opacity = 1.0,
    line width = 1,
    rotate = 0,
    solid
}]coordinates {
(10.000000000000002, 9.404832632851405e-13)
(16.68100537200059, 1.9276395063521136e-13)
(27.825594022071247, 3.227584408609967e-14)
(46.415888336127786, 2.8638459940876785e-14)
(77.4263682681127, 1.0992539641257146e-13)
(129.15496650148842, 2.492863838613256e-14)
(215.44346900318828, 1.1799115066073282e-13)
(359.38136638046274, 4.674011393303684e-13)
(599.4842503189407, 1.5173471623593503e-13)
(999.9999999999998, 6.967083218007258e-13)
};
\addlegendentry{m=0.9}
\addplot+ [color = {rgb,1:red,0.24222430;green,0.64327509;blue,0.30444865},
draw opacity = 1.0,
line width = 2,
solid,mark = square*,
mark size = 1.5,
mark options = {
    color = {rgb,1:red,0.00000000;green,0.00000000;blue,0.00000000}, draw opacity = 1.0,
    fill = {rgb,1:red,0.24222430;green,0.64327509;blue,0.30444865}, fill opacity = 1.0,
    line width = 1,
    rotate = 0,
    solid
}]coordinates {
(10.000000000000002, 3.939934570590639e-14)
(16.68100537200059, 4.932454750797928e-14)
(27.825594022071247, 1.0415860763917689e-14)
(46.415888336127786, 8.194426879352045e-15)
(77.4263682681127, 7.457977676679274e-15)
(129.15496650148842, 1.633636503787527e-14)
(215.44346900318828, 1.769022747089059e-14)
(359.38136638046274, 7.290343415701359e-15)
(599.4842503189407, 5.349735810568049e-14)
(999.9999999999998, 1.0349633858017259e-14)
};
\addlegendentry{m=1.0}
\addplot+ [color = {rgb,1:red,0.76444018;green,0.44411178;blue,0.82429754},
draw opacity = 1.0,
line width = 2,
solid,mark = triangle*,
mark size = 1.5,
mark options = {
    color = {rgb,1:red,0.00000000;green,0.00000000;blue,0.00000000}, draw opacity = 1.0,
    fill = {rgb,1:red,0.76444018;green,0.44411178;blue,0.82429754}, fill opacity = 1.0,
    line width = 1,
    rotate = 0,
    solid
}]coordinates {
(10.000000000000002, 2.3440804773804745e-13)
(16.68100537200059, 8.895018605290465e-15)
(27.825594022071247, 1.7932166462874113e-15)
(46.415888336127786, 5.5683303065201865e-15)
(77.4263682681127, 6.650870074384422e-15)
(129.15496650148842, 3.9617017867790086e-14)
(215.44346900318828, 1.8009148890781935e-14)
(359.38136638046274, 7.967102378815545e-14)
(599.4842503189407, 1.0413671442468686e-13)
(999.9999999999998, 7.769551656829633e-14)
};
\addlegendentry{m=1.1}
\end{axis}

\end{tikzpicture}
\end{small}
}
\caption{Oscillatory moments of Chebyshev polynomials are computed using the UNSD quadrature rule, with polynomial degree $k$ and frequency $\omega$. Relative error is shown as a function of $k$, for several values of $m = \frac{k}{\omega}$. The known recurrence for the moments is unstable when $m \approx 1$, but UNSD is very accurate using only a very small number of points.}\label{fig:chebyshev}
\end{figure}
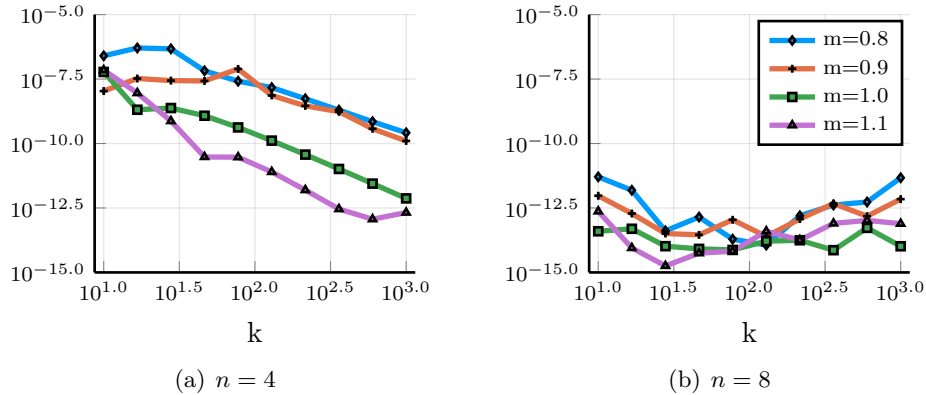

As an example of practical interest, we consider the evaluation of oscillatory moments of Chebyshev polynomials,
\[
 \int_{-1}^1 T_k(x)e^{i\omega x}dx
\]
where the polynomials have moderate to high degree $k$ and the frequency $\omega$ is large. These Chebyshev moments regularly appear in numerical methods for Fourier-type integrals of the form \eqref{eq:I} with $g(x)=x$, in which $f$ is approximated by a Chebyshev expansion \cite{dominguez2011filonclenshawcurtis,ledoux2012interpolatory,huybrechs2012superinterpolation}.

The moments satisfy a recursion, based on the recurrence relation of the Chebyshev polynomials themselves \cite{piessens1971oscillatory}. This recursion is numerically stable for small $k$ \cite{dominguez2011filonclenshawcurtis}. However, it becomes unstable once $k \sim \omega$. The remaining moments can be computed at once by solving a linear system of equations \cite{dominguez2011filonclenshawcurtis}. This system corresponds to following the recurrence in the backwards direction, starting from a moment of high degree that can be determined by other means. For example, in \cite{dominguez2011filonclenshawcurtis} it is determined asymptotically (assuming $k \approx 4\omega$). Since the system is tridiagonal, this is a very efficient way of computing moments for a wide range of degrees of the Chebyshev polynomial. However, it is less effective for the evaluation of just a few moments around the instability $k \sim \omega$. In our example, we will focus exactly on those integrals.


The resonance in the regime $k \sim \omega$ can be reformulated as a problem with coalescing saddle points. This is seen by rewriting the Chebyshev moment as follows, using well-known properties of the Chebyshev polynomials of the first kind:
\begin{align*}
\int_{-1}^1T_k(x) e^{i\omega x}dx &= \int_0^\pi \sin y \, T_k(\cos y)e^{i\omega \cos y} {\rm d}y
\\&=\int_0^\pi\sin y \cos (ky)e^{i\omega \cos y} {\rm d}y
\\&=\frac{1}{2}\int_0^\pi \sin y \, e^{i ky}e^{i\omega \cos y}dt+\frac{1}{2}\int_0^\pi\sin y \, e^{-iky}e^{i\omega \cos y} {\rm d}y \\
&= I_1 + I_2.
\end{align*}
The integral is written as the sum of two integrals of the form \eqref{eq:I}, with oscillatory factors
\[
e^{\pm iky}e^{i\omega \cos y}=e^{i\omega(\pm \mu y+\cos y)},
\]
where $\mu=\frac{k}{\omega}$.
It can be verified that the second oscillator does not have stationary points in the interval $[0,\pi]$. The first oscillator, $\mu y+\cos y$, has two stationary points
\[
 y_1 = \arcsin(\mu), \qquad y_2 = \pi - \arcsin(\mu).
\]
These points coalesce when $\mu = 1$. Thus, the second integral can be approximated by the regular NSD-method, and the first by the UNSD-algorithm. Note that the stationary points are complex-valued if $\mu > 1$.

In order to apply UNSD, following \S\ref{s:genericphase} the oscillator of the second integral is mapped to a cubic oscillator by a change of variables. The full parameter-dependent oscillator is
\[
 g(y) = ky + \omega \cos(y).
\]
We can compute explicitly that
\[
g(y_1) = g(\arcsin(\mu)) = k \arcsin(\mu) + \omega \cos(\arcsin(\mu)) = k \arcsin(\mu) + \omega \sqrt{1-\mu^2},
\]
and, similarly, that $g(y_2) = k \pi - g(y_1)$. Using~\eqref{eq:cubicmap_parameters} this yields
\[
 A = \frac12 g(y_1) + \frac12 g(y_2) = k \frac{\pi}{2}
\]
and
\[
 \delta = \left( k \arcsin(\mu) + \omega \sqrt{1-\mu^2} - k \frac{\pi}{2} \right)^{2/3} \left(\frac32\right)^{2/3}.
\]
Care has to be taken to select the right branch of the cubic root in the expression for $\delta$. In case $\mu < 1$, the standard branch is the appropriate one and $\delta$ is real and positive. In case $\mu > 1$, we chose the branch such that $\delta$ is real and negative.

Next, we have to map the quadrature points $t_{k,\delta}$ of the Gaussian rule back to the $y$-plane. As starting values for Newton-Raphson, we used a crude linear interpolation between the critical points,
\[
 y_{k,\delta}^{(0)} = y_1 + \frac{t_{k,\delta}-t_1}{t_2-t_1} (y_2 - y_1).
\]
That is, we assume that the location of $y_{k,\delta}$ relative to $y_1$ and $y_2$ is similar to the location of $t_{t,\delta}$, relative to $t_1$ and $t_2$. Finally, having computed the mapped quadrature points $y_{k,\delta}$, we can evaluate the jacobian $F'(t)$ using \eqref{eq:jacobian}.

The case of particular practical interest arises when $\mu = \frac{k}{\omega} \approx 1$. Figure \ref{fig:chebyshev} illustrates the accuracy of applying the UNSD scheme to integral $I_1$. It turns out that the method is very accurate precisely in the regime $\mu \approx 1$, for large $k$, using just $4$ quadrature points. Increasing the quadature points to $n=8$ leads to high relative accuracy even for moderately small values of $k$ (and correspondingly small values of $\omega = \frac{k}{\mu}$).

\section*{Acknowledgements}
All authors were supported by FWO Flanders project G.0641.11. In addition, Arno Kuijlaars is supported by a long term structural funding-Methusalem grant of the Flemish Government, and via support of FWO Flanders through projects G.0864.16 and EOS G0G9118N. Daan Huybrechs had additional support from FWO Flanders project G.A004.14N.

\bibliographystyle{abbrv}
\bibliography{cubicosc}

\end{document}